\newtheorem{theorem}{Theorem}[section]
\newtheorem{corollary}[theorem]{Corollary}
\newtheorem{lemma}[theorem]{Lemma}
\newtheorem{proposition}[theorem]{Proposition}
\theoremstyle{definition}
\newtheorem{remark}[theorem]{Remark}
\def\NN{{\mathbb{N}}}
\def\11{\textbf{$1$}}
\newcommand{\mx}{\mathrm{max}}
\newcommand{\mn}{\mathrm{min}}
\newcommand{\one}{\mathbf{1}}
\newcommand{\rank}{\mathrm{rank}}
\newcommand{\spn}{\mathrm{span}}
\newcommand{\vr}{\varepsilon}
\newcommand{\ball}{\mathbf{B}}
\newcommand{\injtens}{\check{\otimes}}
\newcommand{\projtens}{\hat{\otimes}}
\newcommand{\kwap}{{\mathrm{Kw}}}
\begin{document}

\numberwithin{equation}{section}

\title[Injectivity and projectivity in $p$-multinormed spaces]{Injectivity and projectivity in $p$-multinormed spaces}

\author{T.~Oikhberg}

\address{ Dept.~of Mathematics, University of Illinois, Urbana IL 61801, USA}
\email{oikhberg@illinois.edu}

\date{July 7, 2017}



\maketitle

\parindent=0pt
\parskip=3pt

\begin{abstract}
We find large classes of injective and projective $p$-multinormed spaces.
In fact, these classes are universal, in the sense that every $p$-multinormed
space embeds into (is a quotient of) an injective (resp.~projective) $p$-multinormed space.
As a consequence, we show that any $p$-multinormed space has a canonical
representation as a subspace of a quotient of a Banach lattice.
\end{abstract}

\maketitle
\thispagestyle{empty}

\section{Introduction}\label{s:intro}

The study of $p$-multinormed spaces has its roots in the early 1990s,
see \cite{MaN}, or the sadly unpublished thesis \cite{McC}.
In recent years, significant progress has been achieved.
For instance, multinorms are employed in \cite{DP} to investigate
homological properties of Banach modules over group algebras.
In \cite{CN}, multinorms are used to describe properties of Banach spaces (such as $GL_2$)
in terms of ``special'' embeddings into Banach lattices.
Many common constructions, known for Banach and operator spaces (subspaces, quotients,
duality, etc.) have been described in the $p$-multinormed case as well.
We refer the reader to the recent paper \cite{DLOT} for an introduction to the topic.

However, the injective and projective objects in this category have not been
described (with the exception of $p=\infty$). In this paper, we exhibit wide classes
of injective and projective $p$-multinormed spaces, and show that
these classes are universal: every $p$-multinormed
space embeds into an injective object, and is a quotient of a projective one.
Note that similar results are well known in the setting of Banach and operator spaces.
Sections \ref{s:inj mult} and \ref{s:projectivity} deal with the injective and projective
settings, respectively (see the relevant definitions in Section \ref{s:basic}).

The theory of $p$-multinormed spaces was partially motivated by giving an abstract
description of (subspaces of) Banach lattices. Indeed, a Banach lattice $X$ can be
equipped with its natural $p$-multinorm (described below).
By \cite{McC}, any $\infty$-multinormed space embeds into a Banach lattice.
By \cite{DLOT}, for general $p$, the same is true with the extra assumption of
$p$-convexity (and fails without $p$-convexity).
In Theorem \ref{t:subquotient}, we show that any $p$-multinormed space has a canonical
representation as a subspace of a quotient of a Banach lattice.

We should also mention Theorem \ref{t:control} which shows that, for every $\vr > 0$ and
$n \in \NN$, there exists $M = M(n,\vr)$ so that, for any rank $n$ operator $T$
between $p$-multinormed spaces, we have $\|T\|_p \leq (1+\vr) \|I_{\ell^p_M} \otimes T\|$.
This contrasts with the possible lack of exactness of operator spaces
(see e.g. \cite{ER}).


The standard notation is used throughout the paper. We denote the closed unit ball
of a normed space $Z$ by $\ball(Z)$. In Section \ref{s:basic} below, we
review some relevant facts and definitions regarding multinormed spaces and
maps between them (see \cite{DLOT} for proofs and further information).
Unless stated otherwise, we deal with spaces over the real field.

\section{Preliminaries on $p$-multinormed spaces}\label{s:basic}

First review some relevant facts and definitions (see e.g.~\cite{DLOT}).
A {\emph{$p$-multinormed space}} ($1 \leq p \leq \infty$)
is a Banach space $X$, together with a left tensorial cross-norm
$\| \cdot \|_\alpha$ on $\ell^p \otimes X$
(when $p = \infty$, we consider $c_0$ instead of $\ell^\infty$).
Recall that such a norm must satisfy the following conditions:
\begin{enumerate}
\item
For $a \in \ell^p$ and $x \in X$, $\|a \otimes x\|_\alpha = \|a\| \|x\|$;
\item
For $u \in \ell^p \otimes X$, $\|u\|_\alpha \geq \sup | \langle a^* \otimes x^* , u \rangle |$,
where the supremum runs over all $a^* \in (\ell^p)^*, x^* \in X^*$ of norm not exceeding $1$;
\item
For $T \in B(\ell^p)$ and $u \in \ell^p \otimes X$,
$\|T\| \|u\|_\alpha \geq \|(T \otimes I_X)(u)\|_\alpha$
\end{enumerate}
(conditions (1) and (2) define a cross-norm; (3) describes being left tensorial).
We refer the reader to \cite{DFS} or \cite{Rya} for more information about tensor products.

Viewing the spaces $\ell^p_n$ as coordinate subspaces of $\ell^p$ (or $c_0$, for $p = \infty$),
we denote the restriction of $\| \cdot \|_\alpha$ to $\ell^p_n \otimes X$ by $\| \cdot \|_n$
(or simply $\| \cdot \|$, if confusion is unlikely).
The sequence of norms $(\| \cdot \|_n)$ is sometimes referred to as the
{\emph{$p$-multinorm structure}} on $X$. The norms $\| \cdot \|_n$ are cross-norms, and, for any
$T \in B(\ell^p_n, \ell^p_m)$ and $x \in \ell^p_n \otimes X$, we have
$\|T\| \|x\|_n \geq \|(T \otimes I_X)(x)\|_m$ (we can say that $(\| \cdot \|_n)_{n \in \NN}$
is a ``tensorial sequence of norms''). From 
any tensorial sequence of cross-norms on $\ell^p_n \otimes X$,
we can reconstruct a norm on $\ell^p \otimes X$
($c_0 \otimes X$, for $p = \infty$) satisfying $(1)$, $(2)$, and $(3)$. 
This observation allows us to only consider the spaces $(\ell^p_n \otimes X, \| \cdot \|_n)$.

If $X$ and $Y$ are $p$-multinormed spaces, then a linear map $u : X \to Y$
is \emph{$p$-multibounded} if its \emph{$p$-norm}
$$
\|u\|_p = \sup_n \|I_{\ell^p_n} \otimes u : \ell^p_n \otimes X \to \ell^p_n \otimes Y\| =
\|I_{\ell^p} \otimes u : \ell^p \otimes X \to \ell^p \otimes Y\| 
$$
is finite (as before, for $p = \infty$ we replace $\ell^\infty$ by $c_0$).
We denote by $M_p(X,Y)$ the set of $p$-multibounded operators from $X$ to $Y$.
Clearly every $u \in M_p(X,Y)$ is bounded, with $\|u\| \leq \|u\|_p$.
The equality holds when either $X$ or $Y$ is $1$-dimensional.
In general, a bounded operator need not be $p$-multibounded.
An operator $u$ is called a $p$-multicontraction if $\|u\|_p \leq 1$.

Duality pairing exists between $p$-multinormed and $p^\prime$-multinormed spaces
(here and throughout the paper, $p$ and $p^\prime $ are ``conjugate:'' $1/p + 1/p^\prime = 1$).
Suppose $X$ is a $p$-multinormed space, with underlying Banach space $X^\prime$
(usually, we use the same notation for the $p$-multinormed space and for its Banach
space, but in this paragraph, we find it convenient to use the ``forgetful functor'' $^\prime$).
For $n \in \NN$ denote by $(\delta_i)_{i=1}^n$ and $(\delta_i^\prime)_{i=1}^n$
the canonical bases of $\ell^p_n$ and $\ell^{p^\prime}_n$, respectively.
For $x_1^*, \ldots, x_n^* \in X^{\prime *}$ we set
$$
\big\| \sum_{i=1}^n \delta_i^\prime \otimes x_i^* \big\| =
\sup \Big\{ \big| \sum_{i=1}^n \langle x_i^*, x_i \rangle \big| :
 \big\| \sum_{i=1}^n \delta_i^\prime \otimes x_i \big\| \leq 1 \Big\} .
$$
In other words, $\ell^{p^\prime}_n \otimes X^*$ is the dual of $\ell^p_n \otimes X$.
Clearly this defines a $p^\prime$-multinormed space $X^*$, ``built on'' $X^{\prime *}$.

Several standard $p$-multinorm structures will be used in this paper.
Suppose $E$ is a Banach space. The \emph{maximal} and \emph{minimal} $p$-multinorms
correspond to 
$\ell^p_n \projtens E$ and $\ell^p_n \injtens E$,
where $\projtens$ and $\injtens$ refer to the Banach space projective and
injective tensor products. These structures will be denoted by $\mx_p(E)$ and $\mn_p(E)$
respectively. The origin of this terminology is transparent: if $Z$ is a $p$-multinormed space,
then for any $u  \in B(Z,\mn_p(E))$ and $v \in B(\mx_p(E),Z)$, we have
$\|u\|_p = \|u\|$ and $\|v\|_p = \|v\|$.
Duality functions in the expected way: for any Banach space $E$,
$(\mn_p(E))^* = \mx_{p^\prime}(E^*)$, and $(\mx_p(E))^* = \mn_{p^\prime}(E^*)$.

On a Banach lattice $E$, we can define its \emph{natural $p$-multinorm}:
if $\delta_1, \ldots, \delta_n$ is the canonical basis of $\ell^p_n$, and
$x_1, \ldots, x_n \in E$, we set
$$
\big\| \sum_i \delta_i \otimes x_i\big\|_{\ell^p_n \otimes E} =
 \Big\| \big( \sum_i |x_i|^p \big)^{1/p} \Big\|_E 
$$
(the right hand side is defined using the Krivine functional calculus, see e.g.~\cite{M-N}).
One can observe (see \cite[p.~21]{MaN}) that, if $E, F$ are Banach lattices, and
$u : E \to F$ is a positive operator, then $\|u\|_p = \|u\|$.

For $p$-multinormed spaces $(X_i)_{i \in I}$, we can define their
$\ell^\infty$, $c_0$, and $\ell^1$ sums. 
To this end, denote by $(X_i^\prime)_{i \in I}$ the underlying Banach spaces
(once more, we formally distinguish between $p$-multinormed spaces,
and the corresponding Banach spaces).

If $Y = (\sum_i X_i^\prime)_\infty$, then any element
of $\ell^p_n \otimes Y$ can be written as $(x_1, x_2, \ldots)$, with
$x_i \in \ell^p_n \otimes X_i^\prime$. Then set
$\|x\|_n = \sup_i \|x_i\|_{\ell^p_n \otimes X_i}$.
Clearly this defines a $p$-multinormed space (the $\ell^\infty$ sum,
denoted by $(\sum_i X_i)_\infty$), with $Y$ its underlying Banach space.
The $c_0$ sum, denoted by $(\sum_i X_i^\prime)_0$, is defined in a similar fashion.

To define the $\ell^1$ sum, consider a family $x_i \in \ell^p_n \otimes X_i$ ($i \in I$),
with finitely many nonzero entries. For
$x = (x_i)_{i \in I} \in \ell^p_n \otimes (\sum_i X_i^\prime)_{00}$, set
$\|x\| = \sum_i \|x_i\|$.
The completion of $\ell^p_n \otimes (\sum_i X_i^\prime)_{00}$ in this norm is denoted
by $(\sum_i X_i)_1$; clearly the underlying Banach space is $(\sum_i X_i^\prime)_1$.

It is easy to see that, for Banach spaces $(E_i)_{i \in I}$,
$$
\mn_p \Big( \big( \sum_i E_i \big)_\infty \Big) =
\Big( \sum_i \mn_p \big( E_i \big) \Big)_\infty ,
$$
$$
\mn_p \Big( \big( \sum_i E_i \big)_0 \Big) =
\Big( \sum_i \mn_p \big( E_i \big) \Big)_0 ,
$$
and
$$
\mx_p \Big( \big( \sum_i E_i \big)_1 \Big) =
\Big( \sum_i \mx_p \big( E_i \big) \Big)_1 .
$$
Further, for any family of $p$-multinormed spaces $(X_i)_{i \in I}$,
$$
\Big( \big( \sum_i X_i \big)_1 \Big)^* = \big( \sum_i X_i^* \big)_\infty
{\textrm{   and   }}
\Big( \big( \sum_i X_i \big)_0 \Big)^* = \big( \sum_i X_i^* \big)_1 .
$$





We say that a $p$-multinormed space is \emph{$c$-injective} if, for any
$p$-multinormed spaces $X \subset Y$, 
any $p$-multibounded operator $u : X \to Z$ has an extension 
$\tilde{u} : Y \to Z$ with $\|\tilde{u}\|_p \leq c \|u\|_p$,
and $\tilde{u}|_X = u$. In the dual setting, $Z$ is called $p$-projective if,
for every $\vr > 0$, every $p$-quotient map $q : Y \to X$, and every
$p$-multibounded $u : Z \to X$, there exists a lifting $\tilde{u} : Z \to Y$,
so that $\|\tilde{u}\|_p \leq \|u\|_p + \vr$, and $q \tilde{u} = u$.

Similar notions of injectivity and projectivity for Banach and operator spaces
are well known. For instance, $\ell^\infty(I)$ ($\ell^1(I)$) is a $1$-injective
(resp.~$1$-projective) Banach space, for any index set $I$.
The famous Stinespring Extension Theorem asserts that $B(H)$ is a
$1$-injective operator space for any Hilbert space $H$. It is shown in \cite{Ble}
that $\big( \sum_i T(H_i) \big)_1$ is $1$-projective if $(H_i)_{i \in I}$ is a family
of finite dimensional Hilbert spaces ($T(H)$ denotes the trace class on $H$).
It follows that any Banach (operator) space embeds into an injective object of the 
appropriate category, and is a quotient of a projective object. The main goal of this
paper is to establish similar results for $p$-multinormed spaces.

To proceeds, we need to show that, for for finite rank operators, we may limit the
calculate the $p$-multinorm on tensor products with $\ell^p_n$ with limited $n$
(one can view this as an analogue of exactness for operator spaces).

\begin{theorem}\label{t:control}
Suppose $X, Y$ are $p$-multinormed spaces, and $T : X \to Y$ is a rank $n$ operator.
Fix $\vr \in (0,1)$, and let $M = 2^n \lceil 4n^3/\vr \rceil^n$. Then
$\|T\|_p \leq (1+\vr) \|I_{\ell^p_M} \otimes T\|$.
\end{theorem}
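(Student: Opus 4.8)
The plan is to reduce everything to a \emph{compression lemma} for subspaces of $\ell^p_N$ and then feed it through left tensoriality. It suffices to bound $\|(I_{\ell^p_N} \otimes T)u\| \leq (1+\vr)\kappa$, where $\kappa := \|I_{\ell^p_M} \otimes T\|$, uniformly over $N \in \NN$ and over $u = \sum_i \delta_i \otimes \xi_i \in \ell^p_N \otimes X$ with $\|u\| \leq 1$; taking the supremum then gives $\|T\|_p \leq (1+\vr)\kappa$. Write $T = \sum_{k=1}^n x_k^* \otimes y_k$, so that $(I \otimes T)u = \sum_k a_k \otimes y_k$ with $a_k = \sum_i \langle x_k^*, \xi_i\rangle \delta_i \in \ell^p_N$, and set $V = \spn\{a_1,\dots,a_n\} \subseteq \ell^p_N$, $m = \dim V \leq n$. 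I claim the whole theorem reduces to producing contractions $A : \ell^p_N \to \ell^p_M$, $B : \ell^p_M \to \ell^p_N$ and an operator $Q : \ell^p_N \to \ell^p_N$ with $\|Q\| \leq 1+\vr$ and $Q B A|_V = \mathrm{id}_V$. Indeed, since $(A \otimes I_Y)(I_{\ell^p_N} \otimes T) = (I_{\ell^p_M} \otimes T)(A \otimes I_X)$, applying condition (3) at each tensor factor gives $\|(I\otimes T)u\| = \|(Q\otimes I_Y)(B\otimes I_Y)(I_{\ell^p_M}\otimes T)(A\otimes I_X)u\| \leq \|Q\|\,\|B\|\,\kappa\,\|A\| \leq (1+\vr)\kappa$, using $QBA\,a_k = a_k$.

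For the construction I would first \emph{pre-condition} $V$: choose an Auerbach basis $b_1,\dots,b_m$ of $V$ inside $\ell^p_N$, so that every $v = \sum_j c_j b_j \in V$ satisfies $\|c\|_\infty \leq \|v\|$, hence $\|c\|_{p'} \leq n\|v\|$. Put $v_i = (b_j(i))_{j} \in \RR^m$, $r_i = \|v_i\|_p$, and $\theta_i = v_i/r_i$; the normalization $\|b_j\|_p = 1$ gives the crucial identity $\sum_i r_i^p = \sum_j \|b_j\|_p^p = m$. Partition $\{1,\dots,N\}$ into at most $M = 2^n \lceil 4n^3/\vr\rceil^n$ cells $G_l$, first by the sign pattern of $v_i$ (the $2^n$ factor) and then by a grid net of the $\ell^p_m$-sphere of mesh $\delta'$, so that $i,i' \in G_l$ forces $\|\theta_i - \theta_{i'}\|_p \leq \delta'$. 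Let $h_l = \sum_{i \in G_l} r_i \delta_i$, let $\phi_l \in \ell^{p'}(G_l)$ be the norming functional of $h_l$, and define $Ax = \sum_l \langle \phi_l, x\rangle e_l$ and $Be_l = \|h_l\|^{-1} h_l$. Disjointness of supports yields $\|A\| \leq 1$ and $\|B\| = 1$, and $P := BA$ is a norm-one, block-diagonal projection of rank $\leq M$ onto $\spn\{h_l\}$.

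The technical heart is the \emph{relative} distortion estimate $\|Pv - v\| \leq \eta\|v\|$ on $V$, with $\eta$ as small as desired. On each cell, $(Pv)(i) = r_i s_l$ where $s_l$ is the $\phi_l$-weighted average of the values $\langle c,\theta_i\rangle$, $i \in G_l$; since $|s_l - \langle c,\theta_i\rangle| \leq 2\|c\|_{p'}\delta' \leq 2n\delta'\|v\|$, summing $p$-th powers against $\sum_i r_i^p = m \leq n$ gives $\|Pv - v\| \leq 2n^2 \delta'\|v\|$, so a short computation with the stated mesh produces the claimed $M$ and makes $\eta \leq \vr/(2\sqrt n)$. (The Auerbach basis of $V$ is exactly what turns the distortion into a relative bound; without it the nearly dependent $a_k$ destroy the estimate.) Finally I invert $P$ on $V$: writing $v = Pv + (v - Pv)$ with $v - Pv \in \ker P$, set $Q = \mathrm{id} + E$, where $E : \ell^p_N \to \ker P$ is a norm-controlled extension of the defect map $Pv \mapsto v - Pv$ (of norm $\leq \eta/(1-\eta)$ on $P(V)$), obtained by composing with a projection of norm $\leq \sqrt n$ onto the $\leq n$-dimensional space $P(V)$ via Kadec--Snobar. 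Then $QPv = v$ on $V$ and $\|Q\| \leq 1 + \sqrt n\,\eta/(1-\eta) \leq 1+\vr$. This inversion-and-extension step is the main obstacle, since a naive coordinatewise Hahn--Banach extension of $(P|_V)^{-1}$ has no uniform norm bound; routing it through the defect operator on $\ker P$ and paying only the $\sqrt n$ projection-constant factor is what closes the argument. The case $p = \infty$ is identical, with $c_0$ and supremum norms throughout.
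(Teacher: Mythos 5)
Your proof is correct in its essentials and runs on the same engine as the paper's: partition the coordinates of $\ell^p_N$ into at most $M$ cells on which suitably normalized Auerbach-basis data is nearly constant (this is the content of the paper's Lemma \ref{l:l_p f.d.}, proved by the same discretization), then compress through $\ell^p_M$ using the disjointly supported block structure and left tensoriality (the paper's Lemma \ref{l:sublattice}; your $A$ and $B$ are precisely the maps $vP$ and $v^{-1}$ appearing there). The genuine difference is in how the error is closed. The paper first factors $T$ through $X/\ker T$ so as to assume $\dim X=n$, takes the Auerbach basis in $X$ itself so that the coefficient vectors satisfy $\|a_i\|\le 1$, replaces each $a_i$ by a nearby $\tilde a_i$ in the sublattice, and finishes with a plain triangle inequality costing $\sum_i\|a_i-\tilde a_i\|\,\|Tx_i\|\le\vr/2$; no inversion is needed. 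You instead take the Auerbach basis inside $V=\spn\{a_1,\dots,a_n\}\subset\ell^p_N$, prove the relative bound $\|Pv-v\|\le\eta\|v\|$, and exactly invert $P$ on $V$ via the correction $Q=I+E$, paying a Kadec--Snobar factor $\sqrt n$ for the projection onto $P(V)$. Your route buys an exact factorization $QBA|_V=\mathrm{id}_V$ with $\|Q\|\,\|B\|\,\|A\|\le 1+\vr$ and avoids the quotient reduction, at the price of extra machinery (Kadec--Snobar) that the triangle-inequality version renders unnecessary. One bookkeeping caveat: with $\eta\le\vr/(2\sqrt n)$ your mesh is $\delta'\sim\vr/n^{5/2}$, so the resulting net has cardinality of order $\bigl(Cn^{5/2}/\vr\bigr)^n$, which overshoots the stated $M=2^n\lceil 4n^3/\vr\rceil^n$ for small $n$; either sharpen the distortion estimate to $\|Pv-v\|\le 2m\,\delta'\|v\|$ (using $\|c\|_{p'}\le m^{1/p'}\|v\|$ together with $\bigl(\sum_i r_i^p\bigr)^{1/p}=m^{1/p}$) or accept a marginally larger $M$. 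The sign-pattern factor $2^n$ is also redundant once you net the full sphere rather than its positive part.
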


The following result is based on \cite{PR}.

\begin{lemma}\label{l:l_p f.d.}
Suppose $Z$ is an $n$-dimensional subspace of $L^p(\mu)$, $1 \leq p \leq \infty$.
Fix $\vr \in (0,1)$, and let $M_0 = M_0(n,\vr) := 2^n \lceil 2n^2/\vr \rceil^n$.
\begin{enumerate}
\item 
$L^p(\mu)$ contains a sublattice $E$, spanned by characteristic functions
of disjoint sets, of dimension $M \leq M_0$,
so that, for every $z \in \ball(Z)$ there
exists $y \in E$, with $\|z - y\| \leq \vr$.
\item
Moreover, $E$ is lattice isometric to $\ell^p_M$, and
there exists a positive contractive projection $P$ from $L^p(\mu)$ onto $E$.
This projection has the property that, for every $z \in Z$, $\|Pz - z\| \leq 2\vr$.
\end{enumerate}
\end{lemma}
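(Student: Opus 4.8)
The plan is to manufacture the sublattice $E$ directly as the span of the indicators of the atoms of a sufficiently fine finite partition of the measure space, and to take for $P$ the associated conditional expectation. The two parts are linked, which streamlines the argument: once (1) is known and $P$ is \emph{any} positive contractive projection of $L^p(\mu)$ onto $E$, the estimate in (2) is automatic. Indeed, given $z \in \ball(Z)$, choose $y \in E$ with $\|z-y\| \le \vr$; since $Py = y$ and $\|P\| \le 1$,
$$
\|Pz - z\| \le \|P(z-y)\| + \|y - z\| \le 2\|z - y\| \le 2\vr,
$$
which is exactly the asserted bound (and it extends to all of $Z$ by homogeneity). So the genuine content is the construction in (1), together with the production of a contractive projection with range $E$.

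For (1) I would start from an Auerbach basis $z_1, \dots, z_n$ of $Z$, so that every $z \in \ball(Z)$ can be written $z = \sum_i a_i z_i$ with $|a_i| \le 1$. It then suffices to approximate each $z_i$ by a simple function $s_i$ with $\|z_i - s_i\| \le \vr/n$: for then $y = \sum_i a_i s_i$ satisfies $\|z - y\| \le \sum_i |a_i|\,\|z_i - s_i\| \le \vr$, and $y$ lies in the sublattice $E$ generated by $s_1, \dots, s_n$, namely the span of the indicators of the atoms of the algebra generated by the level sets of the $s_i$. If each $s_i$ takes at most $2\lceil 2n^2/\vr\rceil$ distinct values, the common refinement of the $n$ associated partitions has at most $\big(2\lceil 2n^2/\vr\rceil\big)^n = 2^n\lceil 2n^2/\vr\rceil^n = M_0$ atoms, giving the dimension bound. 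The subtlety is that a naive rounding of $z_i$ to a grid produces a number of levels depending on the (a priori arbitrary) distribution of $z_i$ rather than on $n$ and $\vr$ alone; the \emph{uniform} control of the number of values is precisely the quantitative input I expect to draw from \cite{PR}, after first discarding (for $p < \infty$) the negligible contributions of the very large and very small magnitudes of $z_i$ and, for $p = \infty$, rounding uniformly.

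For (2), let $A_1, \dots, A_M$ be the atoms, arranged (by truncating to a finite-measure set carrying all but a $\vr$-fraction of the mass of the $z_i$) to have finite positive measure. The map $\sum_j c_j \chi_{A_j} \mapsto \big(c_j \mu(A_j)^{1/p}\big)_j$ is then a lattice isometry of $E$ onto $\ell^p_M$ (the weights being dropped when $p = \infty$), which establishes the lattice isometry claim. The conditional expectation
$$
Pf = \sum_j \frac{1}{\mu(A_j)}\Big( \int_{A_j} f \, d\mu \Big)\chi_{A_j}
$$
is positive, idempotent with range $E$, and an $L^p$-contraction for every $1 \le p \le \infty$ (Jensen's inequality on each atom, with $Pf$ set to $0$ off $\bigcup_j A_j$, which only decreases the norm). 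Feeding this $P$ into the computation of the first paragraph yields the estimate in (2), completing the proof.

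The main obstacle is thus the dimension count in (1): approximating each basis vector in $L^p$ by a simple function while pinning the number of values at the precise level $2\lceil 2n^2/\vr\rceil$ \emph{independently} of the distribution of $z_i$, and simultaneously arranging that all atoms lie inside a common finite-measure set so that both the weighted isometry with $\ell^p_M$ and the contractivity of $P$ go through. The remaining ingredients — the Auerbach basis, the reduction via coordinates, the isometry $E \cong \ell^p_M$, and the contraction property of conditional expectation — are standard.
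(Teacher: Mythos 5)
Your reduction of (2) to (1) is exactly the paper's (once $P$ is any positive contractive projection onto $E$, the bound $\|Pz-z\|\le 2\vr$ follows from $\|z-y\|\le\vr$ and the triangle inequality), and your overall frame for (1) --- Auerbach basis, approximate each $z_i$ by a simple function with at most $2\lceil 2n^2/\vr\rceil$ values, take the common refinement to get at most $M_0$ atoms, then use the conditional expectation --- is also the right one. But you have left unproved precisely the step that carries the content of the lemma: obtaining, for each $z_i$, a simple approximant with a number of values bounded in terms of $n$ and $\vr$ \emph{only}. You correctly flag this as the subtlety, but then defer it to \cite{PR} with a sketch (``discard the very large and very small magnitudes of $z_i$, then round'') that does not close. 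In an infinite measure space the mass $\int_{\{|z_i|<t\}}|z_i|^p\,d\mu$ need not be small for any $t>0$ chosen in terms of $n,\vr$ (a slowly decaying tail can carry a fixed fraction of the norm), so the small values cannot be discarded; and on the surviving set $\{|z_i|\ge t\}$, whose measure is only bounded by $t^{-p}$, converting an $L^\infty$ rounding error $\delta$ into an $L^p$ error costs a factor $t^{-1}$, so the required number of grid levels depends on $t$ and hence is not controlled by $n$ and $\vr$ alone.

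The paper's resolution is a change of density (the Pelczynski--Rosenthal localization trick): set $z=\sum_i|z_i|$, $z_0=z/\|z\|$, pass to the \emph{probability} measure $\nu=z_0^p\,\mu$ on $\{z\ne 0\}$, and transport via the lattice isometry $Jf=f/z_0$. The transported basis vectors $u_i=Jz_i$ satisfy $\|u_i\|_\infty\le\|z\|\le n$, so a uniform grid of $[-n,n]$ into $k=\lceil 2n^2/\vr\rceil$ cells yields step functions $v_i$ with $\|u_i-v_i\|_{L^p(\nu)}\le\|u_i-v_i\|_\infty\le n/k\le\vr/(2n)$ --- the point being that on a probability space the $L^\infty$ error dominates the $L^p$ error, so the level count is distribution-free. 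One then takes the algebra generated by the level sets ($M\le(2k)^n=M_0$ atoms), the conditional expectation $Q$ in $L^p(\nu)$, and pulls everything back by $J^{-1}$. Without this (or an equivalent) device your atom count, and hence the bound $M\le M_0$, is not established, so the proposal as written has a genuine gap at its central step.
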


\begin{proof}
We present the proof for $1 \leq p < \infty$.
The case of $p=\infty$ requires only minor adjustments.

(1)
Let $z_1, \ldots, z_n$ be a normalized Auerbach basis in $Z$.
Set $z = \sum_i |z_i|$, and $z_0 = z/\|z\|$ (note that $\|z\| \leq n$).
Consider the probability measure $\nu = z_0^p \mu$,
concentrated on $S = \{s \in \Omega : z(s) \neq 0\}$.
The map $J : L^p(\mu) \to L^p(\nu)$, defined by setting
$$
[Jf](s) = \left\{ \begin{array}{ll}
   f(s)/z_0(s)  &   s \in S   \\   0  &  s \notin S   \end{array}  \right.
$$
is a lattice homomorphism, and moreover,
$\|Jf\|_{L^p(\nu)} = \|f \one_S\|_{L^p(\mu)}$.
Henceforth, we will work with the space $U = JZ$, instead of $Z$.
Note that the vectors $u_i = Jz_i$ form a normalized Auerbach basis in $U$.
Moreover, above $\|z\| \leq n$, hence $\max_i \|u_i\|_\infty \leq n$.

Set $k = \lceil 2n^2/\vr \rceil$, $A_j = [-n+2n(j-1)/k,-n+2nj/k)$ for $1 \leq j \leq k-1$,
and $A_k = [n-2n/k,n]$. For $1 \leq i \leq n$, set $B_{ij} = u_i^{-1}(A_j)$,
and define
$$
v_i = \sum_{j=1}^k n\Big(-1 + \frac{2j-1}{k}\Big) \one_{B_{ij}} .
$$
Then
$$
\|u_i - v_i\|_p \leq \|u_i - v_i\|_\infty \leq \frac{n}{k} \leq \frac{\vr}{2n} .
$$
Moreover, any norm one $u \in U$ can be expressed as $u = \sum_i \alpha_i u_i$,
with $\max |\alpha_i| \leq 1$, and therefore,
$$
\|u - \sum_i \alpha_i v_i\|_p \leq \|u - \sum_i \alpha_i v_i\|_\infty \leq
 \sum_i |\alpha_i| \|u_i - v_i\|_\infty \leq n \frac{\vr}{2n} = \frac{\vr}{2} .
$$
Let $S$ be the algebra of sets generated by the sets $B_{ij}$ ($1 \leq i \leq n$, $1 \leq j \leq k$).
As $B_{i\alpha} \cap B_{i\beta} = \emptyset$ whenever $\alpha \neq \beta$, the atoms of $S$
are of the form $\cap_{i=1}^n B_{i j_i}^{\vr_i}$, with $1 \leq j_i \leq k$, and
$B_{i j_i}^{\vr_i}$ being either $B_{i j_i}$, or its complement.
Consequently, $M = |S| \leq M_0 = (2k)^n$. Clearly the functions $v_i$
are constant on atoms of $S$.

(2)
Let $\mu^\prime$ be the restriction of $\mu$ to the set $S$.
Then $J$ is a surjective isometry from $L^p(\mu^\prime)$ onto $L^p(\nu)$.
Let $Q$ be the conditional expectation onto the sublattice
$F$, spanned by characteristic functions $S_1, \ldots, S_M$.
Finally, let $R$ be the restriction map from $L^p(\mu)$ to
$L^p(\mu^\prime)$ ($f \mapsto f \one_S$).
Then $P = J^{-1} Q J R$ is a contractive projective onto $E = J^{-1}(F)$.

Now consider $z \in \ball(Z)$. Then there exists $y \in E$ so that
$\|z - y\| \leq \vr$. By the triangle inequality,
$$
\|z - Pz\| = \|(I-P)(z-y)\| \leq 2 \|z-y\| \leq 2\vr ,
$$
which is what we need.
\end{proof}

\begin{lemma}\label{l:sublattice}
Suppose $Z$ is a $K$-dimensional sublattice of $\ell^p_N$, and $P$
a positive contractive projection from $\ell^p_N$ onto $Z$.
Then, for any operator $u : X \to Y$ between $p$-multinormed spaces, we have
$$
\|I_Z \otimes u : Z \otimes X \to Z \otimes Y\| = \|I_{\ell^p_K} \otimes u\| .
$$
Here we equip $Z \otimes X$ and $Z \otimes Y$ with the norm inherited from
$\ell^p_N \otimes X$, respectively $\ell^p_N \otimes Y$.
\end{lemma}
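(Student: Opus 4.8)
The plan is to reduce the stated equality to the elementary fact that conjugation by surjective isometries preserves operator norms. The idea is to build a canonical isometry identifying $(\ell^p_K \otimes X, \| \cdot \|_K)$ with $(Z \otimes X, \text{inherited norm})$, and similarly for $Y$, in such a way that $I_{\ell^p_K} \otimes u$ is carried precisely to $I_Z \otimes u$.

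First I would record the structure of the sublattice. Since $Z$ is a $K$-dimensional sublattice of $\ell^p_N$, it admits a basis of pairwise disjointly supported vectors $f_1, \ldots, f_K$; after normalizing so that $\|f_j\| = 1$, disjointness gives $\big\| \sum_j c_j f_j \big\| = \big( \sum_j |c_j|^p \big)^{1/p}$, so the map $\phi : \ell^p_K \to \ell^p_N$ defined by $\phi \delta_j = f_j$ is a lattice isometry onto $Z$. Next I would produce a contractive left inverse by setting $\psi = \phi^{-1} P : \ell^p_N \to \ell^p_K$, where $\phi^{-1} : Z \to \ell^p_K$ is the isometric inverse of $\phi$ on its range. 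As $P$ is a contraction onto $Z$ and $\phi^{-1}$ is an isometry, $\psi$ is a contraction; and since $P$ restricts to the identity on $Z = \ran \phi$, we get $\psi \phi = \phi^{-1} P \phi = \phi^{-1} \phi = I_{\ell^p_K}$.

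The key step is then to show that $\phi \otimes I_X : \ell^p_K \otimes X \to Z \otimes X$ is a surjective isometry. Both inequalities come from the tensorial property of the multinorm, namely $\|T\| \, \|x\|_n \geq \|(T \otimes I_X)(x)\|_m$ for $T \in B(\ell^p_n, \ell^p_m)$. Applying this to the contraction $\phi$ yields $\|(\phi \otimes I_X)(x)\|_N \leq \|x\|_K$; applying it to the contraction $\psi$, together with $\psi \phi = I_{\ell^p_K}$, yields $\|x\|_K = \|(\psi \otimes I_X)(\phi \otimes I_X)(x)\|_K \leq \|(\phi \otimes I_X)(x)\|_N$. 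Hence $\phi \otimes I_X$ is isometric, and it is onto because $\phi$ is onto $Z$. The identical argument applies with $Y$ in place of $X$.

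Finally, a check on simple tensors $a \otimes x$ shows that the square with horizontal arrows $I_{\ell^p_K} \otimes u$ and $I_Z \otimes u$ and vertical arrows $\phi \otimes I_X$ and $\phi \otimes I_Y$ commutes, both composites equalling $\phi \otimes u$. Therefore $I_Z \otimes u = (\phi \otimes I_Y)(I_{\ell^p_K} \otimes u)(\phi \otimes I_X)^{-1}$, and since the outer maps are surjective isometries the two operator norms coincide. I expect the main obstacle to be the backward inequality establishing that $\phi \otimes I_X$ is isometric rather than merely contractive: this is exactly where the hypothesis of a contractive projection $P$ is essential, since it is $P$ that furnishes the norm-one left inverse $\psi$; without it one would obtain only the one-sided estimate $\|I_Z \otimes u\| \leq \|I_{\ell^p_K} \otimes u\|$.
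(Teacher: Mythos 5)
Your proof is correct and follows essentially the same route as the paper's: both identify $Z$ with $\ell^p_K$ via the canonical atom-to-atom isometry, use the positive contractive projection $P$ to produce a contractive left inverse, and deduce from the tensorial property of the multinorm that the identification $\phi \otimes I_E$ is a surjective isometry for every $p$-multinormed space $E$, after which conjugation gives the norm equality. (Only your closing aside is slightly off: without $P$ this argument yields neither inequality relating $\|I_Z \otimes u\|$ to $\|I_{\ell^p_K} \otimes u\|$, since one then lacks control of $(\phi \otimes I_X)^{-1}$ in both occurrences; but this does not affect the proof itself.)
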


\begin{proof}
Let $V$ be the ``canonical'' isometry from $Z$ onto $\ell^p_K$ (mapping atoms to atoms).
We have to show that, for any $p$-multinormed space $E$, $v \otimes I_E$ is an isometry from
$Z \otimes E$ (equipped with the norm inherited from $\ell^p_m \otimes E$)
to $\ell^p_K \otimes E$. First, $v^{-1} : \ell^p_K \to Z \subset \ell^p_M$
is a positive isometry, hence, by \cite[p.~21]{MaN}, $\|v^{-1} \otimes I_E\| = \|v^{-1}\|$.
On the other hand,
$$
\|v \otimes I_E\| \leq \|vP \otimes I_E : \ell^p_N \otimes E \to \ell^p_K \otimes E\| =
\|vP\| \leq 1 .
\qedhere
$$
\end{proof}

\begin{proof}[Proof of Theorem \ref{t:control}]
Suppose first that $\dim X = n$. For simplicity, assume $\|I_{\ell^p_M} \otimes T\| \leq 1$
(then, in particular, $\|T\| \leq 1$).
Suppose $x$ is a norm one element of $\ell^p_N \otimes X$, and show that
$\|(I_{\ell^p_N} \otimes T)x\| \leq 1+\vr$.

Let $(x_i)_{i=1}^n$ be a normalized Auerbach basis in $X$, and write
$x = \sum_{i=1}^n a_i \otimes x_i$, with $a_i \in \ell^p_N$.
By the properties of the Auerbach basis, for every $i$ there exists a contractive projection
$P_i$ from $X$ onto $\spn[x_i]$, hence
$$
\|a_i\| = \|a_i \otimes x_i\| = \|(I_{\ell^p_N} \otimes P_i)x\| \leq \|x\| = 1 .
$$
Find a sublattice $Z \subset \ell^p_N$, of dimension $K \leq M$,
with the properties as in Lemma \ref{l:l_p f.d.}, so that, for any $i$,
there exists $\tilde{a}_i \in Z$ with $\|a_i - \tilde{a}_i\| \leq \vr/2n$.
Let $\tilde{x} = \sum_{i=1}^n \tilde{a}_i \otimes x_i$. By the triangle inequality,
\begin{equation}
\|(I_{\ell^p_N} \otimes T)x\| \leq
\|(I_{\ell^p_N} \otimes T) \tilde{x}\| +
\sum_{i=1}^n \|a_i - \tilde{a}_i\| \|T x_i\| .
\label{eq:find sum} 
\end{equation}
However, $\|T x_i\| \leq \|T\| \|x_i\| \leq 1$, hence
$\|\tilde{x}\| \leq \|x\| + \sum_i \|a_i - \tilde{a}_i\| \leq 1 + \vr/2$.
By Lemma \ref{l:sublattice},
$ \|(I_{\ell^p_N} \otimes T) \tilde{x}\| = \|(I_Z \otimes T) \tilde{x}\| =
\|(I_{\ell^p_K} \otimes T) \tilde{x}\| \leq \|\tilde{x}\|$.
Plugging all this back into \eqref{eq:find sum}, we obtain:
$$
\|(I_{\ell^p_N} \otimes T)x\| \leq 1 + \frac{\vr}{2} + n \frac{\vr}{2n} = 1 + \vr .
$$

In the general case, let $\tilde{X} = T/\ker T$. Use \cite[Section 1.3]{DLOT}
to write (in the canonical way) $T = \tilde{T} q$, with $\tilde{T} : \tilde{X} \to Y$,
and $q : X \to \tilde{X}$ being the quotient map.
Now apply the preceding reasoning to $\tilde{T}$.
\end{proof}

\section{Injectivity in $p$-multinormed spaces}\label{s:inj mult}


First we describe ``building blocks'' of $1$-injective objects.

\begin{proposition}\label{p:max p'}
For $1 \leq p \leq \infty$ and $n \in \NN$, $\mx_p(\ell^{p^\prime}_n)$
is $1$-injective as a $p$-multinormed space.
\end{proposition}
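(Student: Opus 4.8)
The plan is to reduce $1$-injectivity to the scalar Hahn--Banach theorem, by first reinterpreting the $p$-multinorm of an operator into $\mx_p(\ell^{p^\prime}_n)$ as an ordinary dual norm. Given a $p$-multibounded $u : X \to \mx_p(\ell^{p^\prime}_n)$, write it in coordinates as $u(x) = \sum_{j=1}^n \langle u_j, x\rangle \delta_j^\prime$ with $u_j \in X^*$, and attach to it the tensor $\hat u = \sum_{j=1}^n \delta_j^\prime \otimes u_j \in \ell^{p^\prime}_n \otimes X^*$. Since $\ell^{p^\prime}_n \otimes X^*$ is precisely the dual of $\ell^p_n \otimes X$, the tensor $\hat u$ is a functional on $\ell^p_n \otimes X$, and the crux of the proof is the identity
$$\|u\|_p = \|\hat u\|_{(\ell^p_n \otimes X)^*} = \sup\{|\langle \hat u, \zeta\rangle| : \zeta \in \ball(\ell^p_n \otimes X)\}.$$

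To establish this identity I would unwind $\|u\|_p = \sup_m \|I_{\ell^p_m} \otimes u\|$. Because $\mx_p(\ell^{p^\prime}_n)$ carries the maximal structure, its level-$m$ norm is the Banach projective tensor norm $\ell^p_m \projtens \ell^{p^\prime}_n$, whose dual is $B(\ell^p_m, \ell^p_n)$ with the operator norm. Thus for $\xi \in \ball(\ell^p_m \otimes X)$ one has $\|(I_{\ell^p_m}\otimes u)\xi\| = \sup\{|\langle (I_{\ell^p_m}\otimes u)\xi, S\rangle| : S \in B(\ell^p_m,\ell^p_n),\ \|S\|\le 1\}$, and a short computation on elementary tensors rewrites the pairing as $\langle (I_{\ell^p_m}\otimes u)\xi, S\rangle = \langle \hat u, (S \otimes I_X)\xi\rangle$. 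The left-tensorial property now gives $\|(S\otimes I_X)\xi\|_n \le \|S\|\,\|\xi\|_m \le 1$, so each $(S\otimes I_X)\xi$ lies in $\ball(\ell^p_n \otimes X)$; conversely the choice $m = n$, $S = I_{\ell^p_n}$, $\xi = \zeta$ returns any prescribed $\zeta \in \ball(\ell^p_n \otimes X)$. Hence the vectors $(S\otimes I_X)\xi$ sweep out exactly $\ball(\ell^p_n \otimes X)$, and taking the supremum over $m$, $\xi$, $S$ yields the displayed identity.

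With the identity in hand the extension is immediate. Suppose $X \subset Y$ are $p$-multinormed spaces and $\|u\|_p \le 1$. The inclusion $X \hookrightarrow Y$ is isometric at every level, so $\ell^p_n \otimes X$ is a subspace of $\ell^p_n \otimes Y$; by the scalar Hahn--Banach theorem $\hat u$ extends to some $\Phi \in (\ell^p_n \otimes Y)^*$ with $\|\Phi\| = \|\hat u\|$. Since $(\ell^p_n \otimes Y)^* = \ell^{p^\prime}_n \otimes Y^*$, we may write $\Phi = \sum_j \delta_j^\prime \otimes \tilde u_j$ with $\tilde u_j \in Y^*$, and set $\tilde u(y) = \sum_j \langle \tilde u_j, y\rangle \delta_j^\prime$. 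Pairing $\Phi$ with $\delta_j \otimes x$ for $x \in X$ shows $\tilde u_j|_X = u_j$, i.e.\ $\tilde u|_X = u$, while the same norm identity, now over $Y$, gives $\|\tilde u\|_p = \|\Phi\| = \|\hat u\| = \|u\|_p$. This is exactly $1$-injectivity.

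The main obstacle is the norm identity itself, and inside it the claim that the vectors $(S \otimes I_X)\xi$ exhaust $\ball(\ell^p_n \otimes X)$: the forward inclusion is the left-tensorial estimate, the reverse is the trivial substitution $m = n$, $S = I$. Everything else --- the duality $(\ell^p_n \otimes X)^* = \ell^{p^\prime}_n \otimes X^*$, the projective description of the maximal structure, and Hahn--Banach --- is either quoted from the preliminaries or classical. For $p = \infty$ one replaces $\ell^p$ by $c_0$ throughout; since $u$ has $n$-dimensional range and the argument runs level by level, only the bookkeeping of duals changes, not its structure.
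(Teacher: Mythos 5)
Your proof is correct and follows essentially the same route as the paper: your identity $\|u\|_p = \|\hat u\|_{(\ell^p_n\otimes X)^*}$ is exactly Lemma \ref{l:compute p-norm} in different notation (note $\langle \hat u,\zeta\rangle = {\mathrm{tr}}\big(u\circ {\mathbf{op}}(\zeta)\big)$, and your duality $(\ell^p_m\projtens\ell^{p^\prime}_n)^* = B(\ell^p_m,\ell^p_n)$ is the paper's trace duality with the nuclear norm), after which the extension step is the same Hahn--Banach argument on $(\ell^p_n\otimes X)^*\subset$ restriction of $(\ell^p_n\otimes Y)^*$. The only divergence is that you apply Hahn--Banach directly to arbitrary $X\subset Y$ and get an exactly norm-preserving extension, which lets you bypass the paper's preliminary reduction to finite-dimensional spaces (Lemma \ref{l:crit inj}) and its closing ``$\vr=0$'' remark --- a harmless simplification.
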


Show first that we can restrict our attention to finite dimensional spaces.

\begin{lemma}\label{l:crit inj}
Suppose $Z$ is a finite dimensional $p$-multinormed space
such that, for any finite dimensional $E \subset F$, and any
$u \in B(E,Z)$, there exists an extension $\tilde{u} : F \to Z$
with $\|\tilde{u}\|_p \leq \lambda \|u\|_p$. Then
$Z$ is $\lambda$-injective.
\end{lemma}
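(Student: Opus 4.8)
The plan is to deduce the global extension property from the finite-dimensional hypothesis by a standard compactness argument, the crucial point being that $\dim Z < \infty$ makes bounded subsets of $Z$ compact. Fix $p$-multinormed spaces $X \subset Y$ and a $p$-multibounded $u : X \to Z$; after normalizing I may assume $\|u\|_p = 1$, and I seek an extension $\tilde u : Y \to Z$ with $\tilde u|_X = u$ and $\|\tilde u\|_p \leq \lambda$.

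First I would build a net of local extensions. Let $\mathcal D$ be the set of finite-dimensional subspaces $F \subseteq Y$, directed by inclusion. For each $F \in \mathcal D$ put $E_F = X \cap F$; since $E_F$ carries the $p$-multinorm induced from $X$ and the inclusion $E_F \hookrightarrow X$ is a $p$-multicontraction, the restriction satisfies $\|u|_{E_F}\|_p \leq 1$. Applying the hypothesis to the pair $E_F \subseteq F$ yields $u_F : F \to Z$ with $u_F|_{E_F} = u|_{E_F}$ and $\|u_F\|_p \leq \lambda$. I then extend each $u_F$ to a (not necessarily linear) map $g_F : Y \to Z$ by setting $g_F = 0$ off $F$; since $\|g_F(y)\| \leq \lambda \|y\|$ for every $y$, each $g_F$ lies in the product $K := \prod_{y \in Y} \{z \in Z : \|z\| \leq \lambda \|y\|\}$.

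Next I would glue. By Tychonoff, $K$ is compact in the product topology, so the net $(g_F)_{F \in \mathcal D}$ has a cluster point $\tilde u \in K$. I would verify the three required properties by observing that each is the image of $\tilde u$ under a continuous map into a finite-dimensional space, and that along the cofinal set of $F$ containing the relevant finite data the corresponding expression is already controlled. Concretely: (i) \emph{linearity} --- for fixed $y_1, y_2$ and scalars $a,b$, the continuous map $f \mapsto f(a y_1 + b y_2) - a f(y_1) - b f(y_2)$ vanishes on $g_F$ once $F \supseteq \spn\{y_1,y_2\}$, hence vanishes at the cluster point; (ii) \emph{$\tilde u|_X = u$} --- for $y \in X$ and any $F \ni y$ we have $y \in E_F$, so $g_F(y) = u(y)$ along a cofinal set, whence $\tilde u(y) = u(y)$; (iii) \emph{$p$-norm bound} --- for any $m$ and any $w = \sum_{k=1}^m \delta_k \otimes y_k \in \ell^p_m \otimes Y$, the continuous map $f \mapsto \sum_{k=1}^m \delta_k \otimes f(y_k)$ carries $g_F$ to $(I_{\ell^p_m} \otimes u_F)w$ once $F \supseteq \spn\{y_k\}$, and this has norm $\leq \lambda \|w\|$; since $\ell^p_m \otimes Z$ is finite dimensional and a closed ball is closed, the cluster value $(I_{\ell^p_m} \otimes \tilde u)w$ also has norm $\leq \lambda \|w\|$. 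Taking the supremum over $w$ and $m$ gives $\|\tilde u\|_p \leq \lambda$.

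The main obstacle is precisely this gluing step: the local extensions $u_F$ need not agree on overlaps, so one cannot form a direct limit. Compactness of $K$ (available exactly because $Z$ is finite dimensional) replaces consistency by passage to a cluster point, and the only care needed is to confirm that each desired property is detected by a continuous map into a finite-dimensional target, so that it persists at the cluster point; the estimate in (iii) persists because membership in a closed ball is a closed condition.
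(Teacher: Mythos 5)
Your proof is correct and follows essentially the same route as the paper's: local extensions on the directed net of finite-dimensional subspaces of $Y$, glued by a compactness argument that relies on $\dim Z < \infty$. The only difference is that you apply the hypothesis to $X \cap F \subset F$ and verify agreement with $u$ pointwise at the cluster point, whereas the paper first reduces to finite-dimensional $X$ (by factoring through $X/\ker u$) so that it can take $F \supseteq X$; your variant is self-contained and spells out the Tychonoff argument that the paper leaves implicit.
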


\begin{proof}
Suppose $X \subset Y$ are $p$-multinormed spaces, and $u : X \to Z$
satisfies $\|u\|_p \leq 1$. We have to show that $u$ has an extension
$\tilde{u} : Y \to Z$ with $\|\tilde{u}\|_p \leq \lambda$.

Note first that $X$ can be assumed to be finite dimensional.
Indeed, suppose the existence of an extension has been established
for finite dimensional $X$'s. For an arbitrary $X$,
let $X_0 = X/\ker u$ and $Y_0 = Y/\ker u$, and denote by $q$
the corresponding quotient map $Y \to Y_0$ (then $q$ is also
the quotient map from $X$ to $X_0$). Furthermore, $u$ generates
a $p$-multicontraction $u_0 : X_0 \to Z$, with $u = u_0 q$.
In turn, $u_0$ has an extension $\tilde{u}_0 : Y_0 \to Z$,
with $\|\tilde{u}_0\|_p \leq \lambda$. Now set $\tilde{u} = \tilde{u}_0 q$.

So, we can assume $X$ is finite dimensional. In $Y$, consider the
net of finite dimensional subspaces $F$, containing $X$, and ordered by inclusion.
Fix a $p$-multicontraction $u : X \to Z$. For each $F$ as above, $u$ has
an extension $\tilde{u}_F : F \to X$ with $\|\tilde{u}_F\|_p \leq \lambda$.
As $Z$ is finite dimensional, one can use a compactness argument to
achieve an extension $\tilde{u} : Y \to Z$.
\end{proof}

We need a technique for calculating $p$-multinorms
of maps $u : E \to \mx_p(\ell^{p^\prime}_n)$.
Below, we shall identify $e \in \ell^p_m \otimes E$ with an operator
${\mathbf{op}}(e) \in B(\ell^{p^\prime}_m, E)$.
Explicitly, if $(\delta_i)_{i=1}^n$ and $(\delta_i^\prime)_{i=1}^n$ are
the canonical bases in $\ell^p_n$ and $\ell^{p^\prime}_n$ respectively, and
$e = \sum_{i=1}^n \delta_i \otimes e_i$, then ${\mathbf{op}}(e)$ takes
$\delta_i^\prime$ to $e_i$.

\begin{lemma}\label{l:compute p-norm}
Suppose $E$ is a $p$-multinormed space. In the above notation,
$$
\big\| u : E \to \mx_p(\ell^{p^\prime}_n) \big\|_p =
\sup \Big\{ \big| {\mathrm{tr}} \big( u \circ {\mathbf{op}}(x) \big) \big| :
 x \in \ball(\ell^p_n \otimes E) \Big\} .
$$
holds for any $u : E \to \mx_p(\ell^{p^\prime}_n)$. Consequently,
$\|u\|_p = \|I_{\ell^p_n} \otimes u\|$.
\end{lemma}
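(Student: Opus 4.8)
The plan is to compute both sides through the duality between the projective tensor product and a space of operators. First I would unwind the definition of the maximal $p$-multinorm: since $\mx_p(\ell^{p^\prime}_n)$ is built so that $\ell^p_m \otimes \mx_p(\ell^{p^\prime}_n) = \ell^p_m \projtens \ell^{p^\prime}_n$ for every $m$, computing $\|I_{\ell^p_m}\otimes u\|$ amounts to computing projective tensor norms. I would then invoke the standard identification $(\ell^p_m \projtens \ell^{p^\prime}_n)^* = B(\ell^p_m, \ell^p_n)$, under which an operator $T$ of norm at most $1$ pairs with a simple tensor $\delta_i \otimes \delta_j^\prime$ by reading off the matrix entry $(T\delta_i)_j$. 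Consequently, writing $x = \sum_i \delta_i \otimes \xi_i \in \ell^p_m \otimes E$, one obtains
$$
\big\|(I_{\ell^p_m}\otimes u)(x)\big\|_{\ell^p_m\otimes\mx_p(\ell^{p^\prime}_n)} = \sup_{T\in\ball(B(\ell^p_m,\ell^p_n))} \Big|\sum_{i,j}(u\xi_i)_j\,(T\delta_i)_j\Big| .
$$

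The decisive step I would carry out next is that each such pairing is itself a trace. Setting $y = (T\otimes I_E)(x) \in \ell^p_n \otimes E$ and expanding, a direct check shows $\langle (I_{\ell^p_m}\otimes u)(x), T\rangle = \tr\big(u\circ\mathbf{op}(y)\big)$; in particular, taking $m = n$ and $T = I_{\ell^p_n}$ gives $\tr(u\circ\mathbf{op}(x)) = \langle (I_{\ell^p_n}\otimes u)(x), I_{\ell^p_n}\rangle$. This single identity drives both inequalities. For the bound $\geq$, I would use only the case $m=n$, $T=I_{\ell^p_n}$: since $\|I_{\ell^p_n}\|_{B(\ell^p_n,\ell^p_n)}=1$, every $|\tr(u\circ\mathbf{op}(x))|$ with $x\in\ball(\ell^p_n\otimes E)$ is at most $\|(I_{\ell^p_n}\otimes u)(x)\| \le \|I_{\ell^p_n}\otimes u\| \le \|u\|_p$, so the supremum is dominated by $\|u\|_p$ (indeed already by $\|I_{\ell^p_n}\otimes u\|$).

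For the reverse inequality I would fix arbitrary $m$, $x \in \ball(\ell^p_m \otimes E)$, and $T \in \ball(B(\ell^p_m,\ell^p_n))$, and apply the trace identity to rewrite the pairing as $\tr(u\circ\mathbf{op}(y))$ with $y = (T\otimes I_E)(x)$. The crucial point is the norm control on $y$: because $E$ is a $p$-multinormed space, the tensorial property $\|(T\otimes I_E)(x)\|_{\ell^p_n\otimes E}\le\|T\|\,\|x\|_{\ell^p_m\otimes E}$ recorded in the preliminaries gives $y \in \ball(\ell^p_n\otimes E)$. Hence each $|\langle(I_{\ell^p_m}\otimes u)(x),T\rangle|$ is bounded by the right-hand supremum; taking the supremum over $T$ yields $\|(I_{\ell^p_m}\otimes u)(x)\|$ bounded by that supremum, and then over $x$ and $m$ gives $\|u\|_p$ bounded by it. Combining the two bounds establishes the formula, and chasing the chain
$$
\sup_y \big| \tr(u\circ\mathbf{op}(y)) \big| \;\le\; \|I_{\ell^p_n}\otimes u\| \;\le\; \|u\|_p \;=\; \sup_y \big| \tr(u\circ\mathbf{op}(y)) \big|
$$
forces $\|u\|_p = \|I_{\ell^p_n}\otimes u\|$, the stated consequence.

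The part I expect to demand the most care is the bookkeeping in the duality and in the $\mathbf{op}$ identification: one must track transposes correctly to confirm that the pairing against $T$ really equals $\tr(u\circ\mathbf{op}((T\otimes I_E)x))$ rather than its transpose, and to verify the normalization $\|I_{\ell^p_n}\|_{B(\ell^p_n,\ell^p_n)}=1$. Everything else is a formal consequence of the tensorial inequality on $E$ and of the definition of $\mx_p$.
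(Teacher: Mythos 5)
Your proof is correct and follows essentially the same route as the paper's: both identify the norm in $\mx_p(\ell^{p^\prime}_n)$ with the projective tensor (equivalently, nuclear) norm, express it by trace duality against contractions (your $T \in \ball(B(\ell^p_m,\ell^p_n))$ is the adjoint of the paper's $w : \ell^{p^\prime}_n \to \ell^{p^\prime}_m$), and then absorb the dual contraction into the tensor via the tensorial inequality $\|(T\otimes I_E)x\| \leq \|T\|\,\|x\|$. The lower bound via pairing with the identity is the same as the paper's use of $|\tr(w)| \leq \nu_1(w)$.
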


\begin{proof}
By definition, 
\begin{equation}
\|u\|_p =
 \sup_{e \in \ball(\ell^p_m \otimes E)}
 \big\|(I_{\ell^p_m} \otimes u) e\big\|_{\ell^p_m \projtens \ell^{p^\prime}_n} =
 \sup_{e \in \ball(\ell^p_m \otimes E)} \nu_1 \big( u \circ {\mathbf{op}}(e) \big) ,
\label{eq:compute p norm}
\end{equation}
where $\nu_1( \cdot )$ refers to the nuclear norm of an operator.
As $\nu_1(w) \geq |{\mathrm{tr}}(w)|$ for every operator $w$
on a finite dimensional space,
$$
\big\| u : E \to \mx_p(\ell^{p^\prime}_n) \big\|_p \geq
\sup \Big\{ \big| {\mathrm{tr}}(u \circ {\mathbf{op}}(e)) \big| :
  \big\| e \big\|_{\ell^p_n \otimes E} \leq 1 \Big\} .
$$
It remains to prove the converse.
By the trace duality between $\nu_1( \cdot )$ and $\| \cdot \|$,
$$
\nu_1 \big( u \circ {\mathbf{op}}(e) \big) =
\sup \Big\{ \big| {\mathrm{tr}}(w \circ u \circ {\mathbf{op}}(e)) \big| \Big\} =
\sup \Big\{ \big| {\mathrm{tr}}(u \circ {\mathbf{op}}(e) \circ w) \big| \Big\} ,
$$
where the supremum runs over all contractions
$w : \ell^{p^\prime}_n \to \ell^{p^\prime}_m$.
We can view ${\mathbf{op}}(e) \circ w$ as
${\mathbf{op}}(\tilde{e}) \in B(\ell^{p^\prime}_n, E)$,
where $\tilde{e} = (w^* \otimes I_E) e$. Then
$$
\big\|\tilde{e}\big\|_{\ell^p_n \otimes E} \leq
\|w^*\| \|e\|_{\ell^p_m \otimes E} \leq 1 ,
$$
and therefore,
$$
\big\| u : E \to \mx_p(\ell^{p^\prime}_n) \big\|_p \leq
\sup \Big\{ \big| {\mathrm{tr}}(u \circ {\mathbf{op}}(e)) \big| :
  \big\| e \big\|_{\ell^{p_m} \otimes E} \leq 1 \Big\} .
$$

The preceding reasoning implies that, in \eqref{eq:compute p norm}, we can take the supremum
over all $e \in \ball(\ell^p_m \otimes E)$. This yields the last claim of the lemma.
\end{proof}

\begin{proof}[Proof of Proposition \ref{p:max p'}]
By Lemma \ref{l:crit inj}, it suffices to show that, for any pair of finite dimensional
$p$-multinormed spaces $X \subset Y$, and any $p$-mul\-ti\-con\-trac\-tion
$u : X \to \mx_p(\ell^{p^\prime}_n)$, there exists a $p$-mul\-ti\-con\-trac\-tive
extension $\tilde{u} : Y \to \mx_p(\ell^{p^\prime}_n)$.

By Lemma \ref{l:compute p-norm}, we can identify
$M_p(X, \mx_p(\ell^{p^\prime}_n))$ and $M_p(Y, \mx_p(\ell^{p^\prime}_n))$ with
the duals of $\ell^p_n \otimes X$ and $\ell^p_n \otimes Y$, respectively.
The former is a subspace of the latter, with the isometric embedding
implemented by $I_{\ell^p_n} \otimes j$ ($j$ being the formal identity from
$X$ to $Y$). Dualizing, we see that
$$
I_{\ell^{p^\prime}_n} \otimes j^* :
 \ell^{p^\prime}_n \otimes Y^* \to \ell^{p^\prime}_n \otimes X^*
$$
is a quotient map. Identifying the spaces above with
$M_p(Y, \mx_p(\ell^{p^\prime}_n))$ and $M_p(X, \mx_p(\ell^{p^\prime}_n))$ respectively,
we conclude that the operator
$$
M_p(Y, \mx_p(\ell^{p^\prime}_n)) \to M_p(X, \mx_p(\ell^{p^\prime}_n)) : v \mapsto vj
$$
is a quotient map. In other words, for any $u \in M_p(X, \mx_p(\ell^{p^\prime}_n))$, and
any $\vr > 0$, there exists $\tilde{u} \in M_p(Y, \mx_p(\ell^{p^\prime}_n))$, extending
$u$, and satisfying $\|\tilde{u}\|_p \leq (1+\vr) \|u\|_p$. As we are working with
finite dimensional spaces, the above result holds with $\vr = 0$ as well.
\end{proof}

\begin{corollary}\label{c:inj Lp}
If $(\Omega,\mu)$ is a $\sigma$-finite measure space, then $\mx_p(L^{p^\prime}(\mu))$
is $1$-injective as a $p$-multinormed space.
\end{corollary}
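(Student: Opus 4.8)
The plan is to present $\mx_p(L^{p^\prime}(\mu))$ as a limit of the finite-dimensional $1$-injective blocks $\mx_p(\ell^{p^\prime}_m)$ supplied by Proposition \ref{p:max p'}, extend a given map through each block, and glue the extensions by a weak-$*$ compactness argument. I work with $1 \le p < \infty$, so that $L^{p^\prime}(\mu) = (L^p(\mu))^*$ is a dual space; the case $p=\infty$ is discussed at the end. First I build the approximating blocks using Lemma \ref{l:l_p f.d.} (applied with $p$ replaced by $p^\prime$). Index a net by pairs $\beta = (Z_0,\vr)$, where $Z_0 \subset L^{p^\prime}(\mu)$ is finite-dimensional and $\vr \in (0,1)$, ordered by $Z_0 \subseteq Z_0'$ and $\vr \ge \vr'$. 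For each $\beta$, Lemma \ref{l:l_p f.d.} furnishes a finite-dimensional sublattice $E_\beta$, lattice isometric to some $\ell^{p^\prime}_{M_\beta}$, and a positive contractive projection $P_\beta : L^{p^\prime}(\mu) \to E_\beta$ with $\|P_\beta z - z\| \le 2\vr$ for all $z \in Z_0$. Directing the net so that $Z_0$ exhausts $L^{p^\prime}(\mu)$ and $\vr \to 0$ yields $P_\beta \to I$ strongly (for every fixed $f$, eventually $f \in Z_0$ with $\vr$ arbitrarily small).

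Two functorial facts let the blocks interact with the target. Because the projective tensor norm satisfies $\|I_{\ell^p_n} \projtens T\| \le \|T\|$, the projection $P_\beta$, viewed as a map $\mx_p(L^{p^\prime}(\mu)) \to \mx_p(E_\beta)$, is a $p$-multicontraction; and since $E_\beta$ is $1$-complemented in $L^{p^\prime}(\mu)$ via $P_\beta$, the inclusion $\mx_p(E_\beta) \hookrightarrow \mx_p(L^{p^\prime}(\mu))$ is an isometric embedding of $p$-multinormed spaces. Moreover $\mx_p(E_\beta) = \mx_p(\ell^{p^\prime}_{M_\beta})$ depends only on the underlying Banach space, so it is $1$-injective by Proposition \ref{p:max p'}. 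Now let $X \subset Y$ be $p$-multinormed spaces and $u : X \to \mx_p(L^{p^\prime}(\mu))$ a $p$-multicontraction. Set $u_\beta = P_\beta u : X \to \mx_p(E_\beta)$, so $\|u_\beta\|_p \le 1$; using the $1$-injectivity of $\mx_p(E_\beta)$, extend $u_\beta$ to $\tilde u_\beta : Y \to \mx_p(E_\beta) \subset \mx_p(L^{p^\prime}(\mu))$ with $\|\tilde u_\beta\|_p \le 1$ and $\tilde u_\beta|_X = P_\beta u$.

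It remains to pass to the limit. Fix an ultrafilter $\mathcal{U}$ refining the tail filter of the net and define $\tilde u(y) = \lim_{\mathcal{U}} \tilde u_\beta(y)$ in the weak-$*$ topology of $L^{p^\prime}(\mu) = (L^p(\mu))^*$; the limit exists because $\|\tilde u_\beta(y)\| \le \|y\|$ and $\ball(L^{p^\prime}(\mu))$ is weak-$*$ compact, and $\tilde u$ is linear of norm at most $1$. On $X$ we obtain $\tilde u|_X = \lim_{\mathcal{U}} P_\beta u = u$, since $P_\beta u(x) \to u(x)$ along the net. To check $\|\tilde u\|_p \le 1$, fix $n$ and $v = \sum_i \delta_i \otimes y_i \in \ball(\ell^p_n \otimes Y)$. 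Then $(I_{\ell^p_n} \otimes \tilde u)v = \sum_i \delta_i \otimes \tilde u(y_i)$, with each $\tilde u(y_i)$ a weak-$*$ limit of $\tilde u_\beta(y_i)$; since for $1 \le p < \infty$ the cross-norm $\|\cdot\|_{\ell^p_n \projtens L^{p^\prime}(\mu)}$ is weak-$*$ lower semicontinuous in its $L^{p^\prime}(\mu)$-entries (the relevant testing functionals lie in the predual $L^p(\mu)$), we get $\|(I_{\ell^p_n} \otimes \tilde u)v\| \le \lim_{\mathcal{U}} \|(I_{\ell^p_n} \otimes \tilde u_\beta)v\| \le 1$. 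Thus $\tilde u$ is the desired $p$-multicontractive extension, and $\mx_p(L^{p^\prime}(\mu))$ is $1$-injective.

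The main obstacle is precisely this final limit when $p = \infty$: there $L^{p^\prime}(\mu) = L^1(\mu)$ need not be a dual space, so the weak-$*$ compactness that produces $\tilde u$ is unavailable, and bounded subsets of $L^1(\mu)$ fail to be weakly compact. For $1 \le p < \infty$ the argument above is complete; the case $p = \infty$ must be handled separately, for instance by invoking the already-known description of injectives in the $\infty$-multinormed (Banach-lattice) setting.
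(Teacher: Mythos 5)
Your argument for $1 \le p < \infty$ is sound and is essentially the paper's: approximate $L^{p^\prime}(\mu)$ by finite-dimensional sublattices that are ranges of positive contractive projections (the paper indexes these by finite subalgebras of $\Omega$ and uses conditional expectations rather than Lemma \ref{l:l_p f.d.}, which is immaterial), extend through each block via Proposition \ref{p:max p'}, and pass to a weak$^*$ cluster point. One small under-justified step: for $p=1$ the ``relevant testing functionals'' a priori live in $(L^\infty(\mu))^*$, not in $L^1(\mu)$, so the lower semicontinuity of $\|\cdot\|_{\ell^1_n \projtens L^\infty(\mu)}$ against coordinatewise weak$^*$ convergence needs the extra (easy, Goldstine-type) observation that $\ball(\ell^\infty_n \injtens L^1(\mu))$ still norms $\ell^1_n \projtens L^\infty(\mu)$; for $1<p<\infty$ reflexivity makes this automatic.

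The genuine gap is the case $p=\infty$, which the statement covers and which you leave open. The obstacle you name --- $L^1(\mu)$ is not a dual space, so pointwise weak$^*$ limits of the $\tilde u_\beta$ are unavailable --- is real, but deferring to ``the already-known description of injectives'' is not a proof, and the paper closes the gap without changing your strategy. The fix is to take the cluster point one level up: regard each $\tilde u_\beta$ as a map into $L^{p^\prime}(\mu)^{**}$ via the canonical embedding, so that the net lives in $\ball\big(B(Y, L^{p^\prime}(\mu)^{**})\big) = \ball\big((Y \projtens L^{p^\prime}(\mu)^*)^*\big)$, which is weak$^*$ compact for \emph{every} $p$. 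This yields $v : Y \to L^{p^\prime}(\mu)^{**}$ with $v|_X = u$ and $\|v\|_p \le 1$. One then composes with a positive (hence $p$-multicontractive) projection $Q : L^{p^\prime}(\mu)^{**} \to L^{p^\prime}(\mu)$: the identity when $1<p^\prime<\infty$; the band projection when $p^\prime = 1$, since $L^1(\mu)$ is a KB-space and hence a band in its bidual (see \cite[Theorem 2.4.10 and Corollary 2.4.4]{M-N}); and the dualized version when $p^\prime=\infty$. Incorporating this bidual detour makes the proof complete and uniform in $p$; as written, yours establishes the corollary only for $1 \le p < \infty$ (Remark \ref{r:lea} notes the $p=\infty$ case is in \cite{McC}, but the paper's own proof does not rely on that).
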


\begin{remark}\label{r:lea}
For $p=\infty$ this result is contained in \cite{McC}.
\end{remark}

\begin{proof}
Consider the family $I$ of all finite subalgebras of $\Omega$, ordered by inclusion
(if $\mu$ is a finite measure, we may additionally assume that they contain $\Omega$).
For $i \in I$ let $Z_i$ be the subspace of $L^{p^\prime}(\mu)$ generated by this algebra.
Clearly $Z_i$ is isometric to $\ell^{p^\prime}_{N_i}$, and moreover,
is the range of a conditional expectation $Q_i$
($Q_i$ is positive, hence, due to e.g. \cite[p.~21]{MaN} $p$-multicontractive).
Further, denote by $J_i$ the embedding of $Z_i$ into $L^{p^\prime}(\mu)$.
Note that, for any $z \in L^{p^\prime}(\mu)$, $\lim_i \|J_i Q_i z - z\| = 0$.

Suppose $X$ is a subspace of a $p$-multinormed space $Y$.
For a $p$-multicontraction $u : X \to L^{p^\prime}(\mu)$, we have to find
a $p$-multicontraction $\tilde{u} : Y \to L^{p^\prime}(\mu)$ extending it.
For $i \in I$, set $u_i = Q_i u : X \to Z_i$.
By the above, $u_i \to u$ in the point-norm topology.
By Proposition \ref{p:max p'}, $u_i$ has a $p$-multicontractive extension
$\tilde{u}_i : Y \to Z_i$. Then the net $(J J_i \tilde{u}_i)_{i \in I}$
($J$ is the canonical embedding of $L^{p^\prime}(\mu)$ into $L^{p^\prime}(\mu)^{**}$)
clearly belongs to the unit ball of
$B(Y, L^{p^\prime}(\mu)^{**}) = (Y \projtens L^{p^\prime}(\mu)^*)^*$
(see e.g. \cite[Section VIII.2]{DU}),
hence it has a weak$^*$ cluster point $v \in \ball(B(Y, L^{p^\prime}(\mu)^{**}))$. 

Now recall that there exists a positive (and therefore, $p$-multicontractive)
projection $Q$ from $L^{p^\prime}(\mu)^{**}$ onto $L^{p^\prime}(\mu)$.
Indeed, for $1 < p < \infty$ the identity map will do.
For $p = 1$, note that $L^1(\mu)^{**}$ is an abstract $L$-space.
By \cite[Section 2.4]{M-N}, $L^1(\mu)$ and its second dual are
KB-spaces (hence order continuous). By \cite[Theorem 2.4.10]{M-N},
$L^1(\mu)$ is a band in its second dual. The existence of $Q$
now follows from \cite[Corollary 2.4.4]{M-N}. To handle the case
of $p = \infty$, dualize.

We claim that $\tilde{u} = Q v$ has the desired properties --
that is, (i) $\tilde{u}|_X = u$, and (ii) $\|\tilde{u}\|_p \leq 1$.

For (i), pick $x \in X$. For any $z^* \in L^{p^\prime}(\mu)^*$,
$\langle x \otimes z^* , v \rangle = \langle z^*, vx \rangle$
is a cluster point of the net
$\langle x \otimes z^* , u_i \rangle = \langle z^*, u_i x \rangle$.
In other words, $v x$ is a weak$^*$ cluster point of the net $(u_i x)_{i \in I}$.
However, $u_i x \to u x$ in norm, hence $vx = ux$.

For (ii), pick $y = \sum_{j=1}^m \delta_j \otimes y_j \in \ball(\ell^p_n \otimes Y)$.
For any $i \in I$, $(I_{\ell^p_n} \otimes \tilde{u}_i) y =
 \sum_{j=1}^m \delta_j \otimes \tilde{u}_i y_j$ lies in the unit ball
of $\ell^p_n \projtens L^{p^\prime}(\mu)$. By duality, this is equivalent to the following:
if $\sum_{j=1}^n \delta_j^\prime \otimes z_j^*$ lies in the
unit ball of $\ell^{p^\prime}_n \injtens L^p(\mu)$, then
$|\sum_j \langle z_j^*, u_i y_j \rangle| \leq 1$.
As $v$ is a weak$^*$ cluster point of the net $(u_i)$,
$\sum_j \langle z_j^*, v y_j \rangle$ must be a cluster point
of the net $(\sum_j \langle z_j^*, u_i y_j \rangle)_i$.
Thus, $\|v\|_p \leq 1$.
\end{proof}

We immediately obtain:

\begin{theorem}\label{t:gen inj}
Suppose $(\Omega_i, \mu_i)_{i \in I}$ is a collection of measure spaces.
Then $(\sum_{i \in I} \mx_p(L^{p^\prime}(\mu_i)))_\infty$ is $1$-injective.
\end{theorem}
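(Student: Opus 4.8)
The plan is to combine Corollary \ref{c:inj Lp} with a general permanence principle: an $\ell^\infty$-sum of $1$-injective $p$-multinormed spaces is again $1$-injective. Each summand $\mx_p(L^{p^\prime}(\mu_i))$ is $1$-injective by Corollary \ref{c:inj Lp} (reducing to the $\sigma$-finite case if necessary, since every element of $L^{p^\prime}(\mu_i)$ is supported on a $\sigma$-finite set), so once the permanence principle is in place the theorem follows at once, as the phrase ``we immediately obtain'' suggests.

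To prove the permanence principle, I would write $Z = (\sum_i Z_i)_\infty$ with each $Z_i$ being $1$-injective, and fix $p$-multinormed spaces $X \subset Y$ together with a $p$-multicontraction $u : X \to Z$. First I would observe that each coordinate projection $\pi_i : Z \to Z_i$ is a $p$-multicontraction; this is immediate from the defining formula $\|w\|_{\ell^p_n \otimes Z} = \sup_i \|(I_{\ell^p_n} \otimes \pi_i)w\|_{\ell^p_n \otimes Z_i}$ for the $\ell^\infty$-sum. Hence $u_i := \pi_i u : X \to Z_i$ satisfies $\|u_i\|_p \leq 1$, and the $1$-injectivity of $Z_i$ furnishes an extension $\tilde{u}_i : Y \to Z_i$ with $\tilde{u}_i|_X = u_i$ and $\|\tilde{u}_i\|_p \leq 1$.

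Next I would assemble these into a single map $\tilde{u} : Y \to \prod_i Z_i$, $\tilde{u}(y) = (\tilde{u}_i(y))_i$. Since $\|\tilde{u}_i y\| \leq \|\tilde{u}_i\|_p \|y\| \leq \|y\|$ for every $i$, the family $(\tilde{u}_i y)_i$ is uniformly bounded, so $\tilde{u}$ takes values in the $\ell^\infty$-sum $Z$. That $\tilde{u}$ extends $u$ is clear coordinatewise, as $\pi_i \tilde{u}|_X = \tilde{u}_i|_X = u_i = \pi_i u$. For the norm estimate I would use that $I_{\ell^p_n} \otimes \pi_i$ commutes with the functor $I_{\ell^p_n} \otimes (-)$, so the $i$-th coordinate of $(I_{\ell^p_n} \otimes \tilde{u})w$ is exactly $(I_{\ell^p_n} \otimes \tilde{u}_i)w$; the supremum formula for the $\ell^\infty$-sum then gives
\[
\|(I_{\ell^p_n} \otimes \tilde{u})w\|_{\ell^p_n \otimes Z} = \sup_i \|(I_{\ell^p_n} \otimes \tilde{u}_i)w\|_{\ell^p_n \otimes Z_i} \leq \sup_i \|\tilde{u}_i\|_p \|w\| \leq \|w\| .
\]
Taking the supremum over $n \in \NN$ yields $\|\tilde{u}\|_p \leq 1$, which is what is required.

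The argument is genuinely short, so there is no serious obstacle; the only points deserving care are the two interchanges of a coordinate projection with $I_{\ell^p_n} \otimes (-)$ and the verification that the assembled map lands in the $\ell^\infty$-sum rather than merely in the unrestricted product $\prod_i Z_i$ — both of which follow directly from the definition of the $\ell^\infty$-sum and the uniform bound $\|\tilde{u}_i\|_p \leq 1$. The one substantive input is Corollary \ref{c:inj Lp}, which has already done the hard work for a single summand.
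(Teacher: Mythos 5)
Your proof is correct and follows exactly the route the paper intends: the theorem is stated as an immediate consequence of Corollary \ref{c:inj Lp}, the implicit argument being precisely the permanence principle you prove, namely that an $\ell^\infty$-sum of $1$-injective $p$-multinormed spaces is $1$-injective via coordinatewise extension. The details you supply (coordinate projections being $p$-multicontractions, the assembled map landing in the $\ell^\infty$-sum, and the supremum formula for the norm) are all sound.
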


Next we show that any $p$-multinormed space embeds into
a $1$-injective $p$-multinormed space.

\begin{theorem}\label{t:embed inj}
Suppose $X$ is a $p$-multinormed space. Then there exists a family
of integers $(n_i)_{i \in I}$, so that $X$ embeds $p$-multiisometrically
into $(\sum_{i \in I} \mx_p(\ell^{p^\prime}_{n_i}))_\infty$.
If, moreover, $X$ is a dual $p$-multinormed space, then the embedding
can be made weak$^*$ to weak$^*$ continuous.
\end{theorem}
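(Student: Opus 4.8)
The plan is to exploit Lemma \ref{l:compute p-norm}, which identifies each $p$-multicontraction into a maximal space with a functional, and to let $X$ embed into the $\ell^\infty$-sum indexed by \emph{all} such contractions. Concretely, I would take $I$ to be the set of all pairs $(m,u)$ with $m \in \NN$ and $u : X \to \mx_p(\ell^{p^\prime}_m)$ a $p$-multicontraction, set $n_{(m,u)} = m$, and define $\Phi : X \to (\sum_{(m,u) \in I} \mx_p(\ell^{p^\prime}_m))_\infty$ by $\Phi(x) = (u(x))_{(m,u)}$. Because the norm on $\ell^p_n \otimes (\sum_i X_i)_\infty$ is the supremum of the coordinate norms and each $u$ is a $p$-multicontraction, $\Phi$ is automatically a $p$-multicontraction; the content of the proof lies in the reverse inequality.

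For the isometry I would fix $w \in \ell^p_n \otimes X$ and compute $\|w\|$ by duality. Since $(\ell^p_n \otimes X)^* = \ell^{p^\prime}_n \otimes X^*$, we have $\|w\| = \sup |\langle \zeta, w\rangle|$ over $\zeta$ in the unit ball of $\ell^{p^\prime}_n \otimes X^*$. Each $\zeta = \sum_j \delta_j^\prime \otimes x_j^*$ defines a map $u_\zeta : X \to \mx_p(\ell^{p^\prime}_n)$ by $u_\zeta(x) = \sum_j \langle x_j^*, x\rangle \delta_j^\prime$, and a direct computation gives $\langle \zeta, w\rangle = \tr(u_\zeta \circ \mathbf{op}(w))$; moreover Lemma \ref{l:compute p-norm} yields $\|u_\zeta\|_p = \|\zeta\|$, so $\zeta$ lies in the unit ball exactly when $u_\zeta$ is a $p$-multicontraction, i.e.~when $(n,u_\zeta) \in I$. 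Combining $|\tr(\cdot)| \le \nu_1(\cdot)$ with the identity $\nu_1(u_\zeta \circ \mathbf{op}(w)) = \|(I_{\ell^p_n} \otimes u_\zeta)(w)\|$ established in the proof of Lemma \ref{l:compute p-norm} gives
$$
|\langle \zeta, w\rangle| \le \|(I_{\ell^p_n} \otimes u_\zeta)(w)\| \le \|(I_{\ell^p_n} \otimes \Phi)(w)\| .
$$
Taking the supremum over $\zeta$ shows $\|w\| \le \|(I_{\ell^p_n} \otimes \Phi)(w)\|$, so $\Phi$ is a $p$-multiisometry.

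For the dual statement, suppose $X = W^*$ with $W$ a $p^\prime$-multinormed space. I would repeat the construction but take $I$ to consist only of the weak$^*$-continuous $p$-multicontractions $u : X \to \mx_p(\ell^{p^\prime}_m)$. The key observation is that $u_\zeta$ is weak$^*$-continuous precisely when $\zeta$ lies in the predual $\ell^{p^\prime}_n \otimes W$ of $\ell^p_n \otimes X = \ell^p_n \otimes W^* = (\ell^{p^\prime}_n \otimes W)^*$, using the $p$-multinormed duality applied to $W$. Since $\|w\|$ is computed as the supremum of $|\langle \zeta, w\rangle|$ over the unit ball of this predual, the isometry argument above goes through verbatim with $\zeta$ restricted to $\ell^{p^\prime}_n \otimes W$. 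Finally, writing $(\sum_i \mx_p(\ell^{p^\prime}_{n_i}))_\infty = ((\sum_i \mn_{p^\prime}(\ell^p_{n_i}))_1)^*$ and using that each coordinate $u$ is weak$^*$-continuous, for every $\eta$ in the $\ell^1$-predual the functional $x \mapsto \langle \eta, \Phi(x)\rangle$ is a uniform-on-bounded-sets limit of weak$^*$-continuous functionals, hence weak$^*$-continuous; so $\Phi$ is weak$^*$ to weak$^*$ continuous.

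The contractivity of $\Phi$ and the trace/nuclear-norm inequality are immediate from Lemma \ref{l:compute p-norm}. The main obstacle I anticipate is the bookkeeping in the dual case: correctly identifying the predual of the $\ell^\infty$-sum as $(\sum_i \mn_{p^\prime}(\ell^p_{n_i}))_1$, matching the weak$^*$-continuous contractions with elements of $\ell^{p^\prime}_n \otimes W$ through the $p$-multinormed duality, and confirming that these restricted functionals still norm every $w \in \ell^p_n \otimes W^*$, which is exactly what preserves isometry after shrinking the index set.
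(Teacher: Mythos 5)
Your proposal is correct and follows essentially the same route as the paper: both identify functionals on $\ell^p_n \otimes X$ with $p$-multicontractions into $\mx_p(\ell^{p^\prime}_n)$ via Lemma \ref{l:compute p-norm} and the trace/nuclear-norm duality, and then assemble an $\ell^\infty$-sum of enough such maps to norm every tensor (with the predual ball replacing the full dual ball in the weak$^*$ case). The only difference is cosmetic: the paper picks one Hahn--Banach norming functional per element $x$, whereas you index over all $p$-multicontractions at once, which slightly streamlines the dual case by avoiding the $(1-\vr)$-approximation.
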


\begin{remark}
In \cite{McC}, the first statement of this proposition is proved for $p=\infty$
(using different methods).
\end{remark}

\begin{proof}
First show that, for any norm one $x \in \ell^p_n \otimes X$,
there exists a $p$-multicontraction $u : X \to \mx_p(\ell^{p^\prime}_n)$
so that
$$
\big\|(I_{\ell^p_n} \otimes u)(x)\big\|_{\ell^p_n \projtens \ell^{p^\prime}_n} = 1 .
$$
Note that $\ball(\ell^p_n \otimes X)$ is a convex balanced
subset of $\ell^p_n \otimes X$. By Hahn-Banach Theorem, there exists a linear
functional $x^* \in (\ell^p_n \otimes X)^*$ so that (i) $\langle x^*, x \rangle = 1$,
and (ii) $|\langle x^*, y \rangle| \leq 1$ for any $y \in \ball(\ell^p_n \otimes X)$.
Define ${\mathbf{op}}(x^*) , {\mathbf{op}}(y) : X \to \ell^{p^\prime}_n$
as in the paragraph preceding Lemma \ref{l:compute p-norm}.
By the above, ${\mathrm{tr}}\big({\mathbf{op}}(x^*) {\mathbf{op}}(x)\big) = 1$,
and $\big|{\mathrm{tr}}\big({\mathbf{op}}(x^*) {\mathbf{op}}(y)\big)\big| \leq 1$
for any $y \in \ball(\ell^p_n \otimes X)$
(here ${\mathbf{op}}(x), {\mathbf{op}}(y)$ are viewed as taking $\ell^{p^\prime}_n$ to $X$).
By Lemma \ref{l:compute p-norm},
$$
\|u\|_p = \max \big\{ \big|{\mathrm{tr}}\big({\mathbf{op}}(x^*) {\mathbf{op}}(y)\big)\big| :
 y \in \ball(\ell^p_n \otimes X) \big\} = 1 .
$$
%
On the other hand (see Lemma \ref{l:compute p-norm} again),
$$
\big\| (I_{\ell^p_n} \otimes u) x \big\| =
\nu_1 \big( u \circ {\mathbf{op}}(x) \big) \geq
\big| {\mathrm{tr}}\big( u \circ {\mathbf{op}}(x) \big) \big| ,
$$
hence $u$ has the desired properties.

Now suppose $X$ is the dual of some $p^\prime$-multinormed space $X_*$.
Fix $x \in \ell^p_n \otimes X$ and $\vr > 0$. There exists
$x_* \in \ball(\ell^{p^\prime}_n \otimes X_*)$ so that
$|\langle x, x_* \rangle | \geq (1-\vr) \|x\|$.
Reasoning as before, we conclude that $x_*$ determines
a weak$^*$ continuous $p$-multicontraction $u : X \to \mx_p(\ell^{p^\prime}_n)$,
with $\|(I_{\ell^p_n} \otimes u) x\| \geq (1-\vr) \|x\|$.
Taking a direct sum, we obtain the desired embedding.
\end{proof}

The preceding theorem has a ``finite dimensional'' version.

\begin{proposition}\label{p:embed f.d.}
For every $\vr > 0$ and $n \in \NN$ there exists $N = N(n,\vr) \in \NN$ so that
for any $n$-dimensional $p$-multinormed space $E$ there
exists $E^\prime \subset \ell^\infty_N(\mx_p(\ell^{p^\prime}_N))$
and a $p$-multicontraction $U : E \to E^\prime$ with
$\|U^{-1}\|_p < 1 + \vr$.
\end{proposition}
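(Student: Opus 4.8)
The plan is to combine the construction underlying Theorem \ref{t:embed inj} with the finite-level control of Theorem \ref{t:control}, the essential point being that the number of summands can be bounded by a \emph{norm-independent} covering estimate. Fix $\vr \in (0,1)$ and $n$, and choose auxiliary parameters $\delta, \eta \in (0,1)$ with $(1+\delta)/(1-\eta) < 1+\vr$ (for instance $\delta = \eta = \vr/4$). Let $M = 2^n \lceil 4n^3/\delta \rceil^n$ be the integer furnished by Theorem \ref{t:control}, so that every rank $n$ operator $T$ between $p$-multinormed spaces satisfies $\|T\|_p \leq (1+\delta)\|I_{\ell^p_M} \otimes T\|$.

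Now let $E$ be an arbitrary $n$-dimensional $p$-multinormed space. The finite-dimensional normed space $\ell^p_M \otimes E$ has dimension $Mn$, so, by the standard volumetric argument (valid for \emph{any} norm on a space of that dimension), its unit sphere admits an $\eta$-net $\{x_1, \ldots, x_{N_0}\}$ with $N_0 \leq (1 + 2/\eta)^{Mn}$; crucially this cardinality bound depends only on $n$ and $\vr$, not on the multinorm structure of $E$. For each $j$ I would invoke the first step of the proof of Theorem \ref{t:embed inj} to produce a $p$-multicontraction $u_j : E \to \mx_p(\ell^{p^\prime}_M)$ with $\|(I_{\ell^p_M} \otimes u_j)(x_j)\| = \|x_j\| = 1$, and then assemble $U = (u_1, \ldots, u_{N_0}) : E \to \ell^\infty_{N_0}(\mx_p(\ell^{p^\prime}_M))$.

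Next I would verify the two required estimates. Since the target is an $\ell^\infty$-sum, $\|U\|_p = \sup_j \|u_j\|_p \leq 1$, so $U$ is a $p$-multicontraction, and $\|(I_{\ell^p_M} \otimes U)(x)\| = \sup_j \|(I_{\ell^p_M} \otimes u_j)(x)\|$ for $x \in \ell^p_M \otimes E$. For a norm-one $x$, choosing a net point $x_j$ with $\|x - x_j\| \leq \eta$ and using $\|I_{\ell^p_M} \otimes u_j\| \leq 1$ gives $\|(I_{\ell^p_M} \otimes u_j)(x)\| \geq \|(I_{\ell^p_M} \otimes u_j)(x_j)\| - \eta \geq 1-\eta$; hence $\|(I_{\ell^p_M} \otimes U)(x)\| \geq (1-\eta)\|x\|$ for all $x$. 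In particular $U$ is injective; setting $E^\prime = U(E)$, the inverse satisfies $\|I_{\ell^p_M} \otimes U^{-1}\| \leq (1-\eta)^{-1}$. As $U^{-1} : E^\prime \to E$ has rank $n$, Theorem \ref{t:control} yields $\|U^{-1}\|_p \leq (1+\delta)\|I_{\ell^p_M} \otimes U^{-1}\| \leq (1+\delta)/(1-\eta) < 1 + \vr$.

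Finally I would absorb the two parameters $M$ and $N_0$ into a single $N = N(n,\vr) := \max\{M, N_0\}$. Since the coordinate inclusion $\ell^{p^\prime}_M \hookrightarrow \ell^{p^\prime}_N$ is positively $1$-complemented, it induces a $p$-multiisometric embedding $\mx_p(\ell^{p^\prime}_M) \hookrightarrow \mx_p(\ell^{p^\prime}_N)$, and padding with zero summands gives a $p$-multiisometric embedding $\ell^\infty_{N_0}(\mx_p(\ell^{p^\prime}_M)) \hookrightarrow \ell^\infty_N(\mx_p(\ell^{p^\prime}_N))$, into which $E^\prime$ then sits. The main obstacle is precisely the uniformity of $N$: a priori each $E$ could demand its own net, and the resolution is the observation that the covering number of the unit ball of the $Mn$-dimensional space $\ell^p_M \otimes E$ is controlled by a quantity depending only on the dimension $Mn$ and the scale $\eta$, hence only on $n$ and $\vr$.
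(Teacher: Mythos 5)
Your proposal is correct and follows essentially the same route as the paper's proof: reduce to level $M$ via Theorem \ref{t:control}, take a net in (the sphere of) $\ell^p_M \otimes E$ whose cardinality is controlled only by the dimension $Mn$ and the scale, apply the Hahn--Banach step from the proof of Theorem \ref{t:embed inj} to each net point, and assemble the maps as an $\ell^\infty$-sum. Your explicit final padding step, equalizing the number of summands and the size of the $\ell^{p^\prime}$ blocks into a single $N$, is a detail the paper leaves implicit, but otherwise the two arguments coincide.
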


The following lemma is folklore, and can be proved by comparing volumes (see e.g. \cite{MS}).

\begin{lemma}\label{l:volume}
If $Z$ is a finite dimensional normed space, and $\delta > 0$, then
$\ball(Z)$ contains a $\delta$-net of cardinality not exceeding $(\delta^{-1}+1)^{\dim Z}$.
\end{lemma}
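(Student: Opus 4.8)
The plan is to prove this by a volume comparison (packing) argument, which is the method the surrounding remark alludes to. Fix $d = \dim Z$, identify the underlying vector space of $Z$ with $\RR^d$ (as real vector spaces), and let $\mu$ denote a fixed Lebesgue (translation-invariant Haar) measure on $\RR^d$. The only structural facts I will use about $\mu$ are that $\ball(Z)$, being a bounded set with nonempty interior, has $0 < \mu(\ball(Z)) < \infty$, that $\mu$ is translation invariant, and that $\mu$ is homogeneous of degree $d$ under dilations: $\mu(r\,C) = r^d \mu(C)$ for $r > 0$ and measurable $C$. These let me pass freely between the volumes of scaled and translated copies of $\ball(Z)$.

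First I would produce the net. Choose a subset $S \subseteq \ball(Z)$ that is maximal with respect to the property that any two distinct points of $S$ are at distance at least $\delta$. Maximality forces $S$ to be a $\delta$-net: if some $y \in \ball(Z)$ had $\|y - s\| \geq \delta$ for every $s \in S$, then $S \cup \{y\}$ would still be $\delta$-separated, contradicting maximality; hence every $y \in \ball(Z)$ lies within $\delta$ of some point of $S$. (Since $\ball(Z)$ is totally bounded, $S$ is automatically finite, but this also drops out of the cardinality bound below.) So it suffices to bound $|S|$.

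Next comes the volume count. Because the points of $S$ are pairwise at distance $\geq \delta$, the translated balls $s + \tfrac{\delta}{2}\,\mathrm{int}\,\ball(Z)$, for $s \in S$, are pairwise disjoint: a common point would force two centers to lie within distance $\delta$. Each such ball is contained in $(1+\tfrac{\delta}{2})\ball(Z)$, since its centre lies in $\ball(Z)$ and its radius is $\tfrac{\delta}{2}$. Using disjointness, translation invariance, and homogeneity,
$$
|S|\,\Big(\tfrac{\delta}{2}\Big)^{d}\mu\big(\ball(Z)\big) = \sum_{s \in S}\mu\big(s + \tfrac{\delta}{2}\ball(Z)\big) \leq \mu\big((1+\tfrac{\delta}{2})\ball(Z)\big) = \Big(1+\tfrac{\delta}{2}\Big)^{d}\mu\big(\ball(Z)\big),
$$
and dividing by the (finite, positive) quantity $(\tfrac{\delta}{2})^{d}\mu(\ball(Z))$ bounds $|S|$ by a ratio of volumes.

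The one point requiring care, and which I regard as the main obstacle, is extracting precisely the stated exponential base $\delta^{-1}+1$. The crude packing above yields the ratio $\big((1+\tfrac{\delta}{2})/(\tfrac{\delta}{2})\big)^d = (1 + 2\delta^{-1})^d$, which is weaker than the claimed $(\delta^{-1}+1)^{d}$. To sharpen the constant I would rescale the packing so that the excluded (half-radius) balls are compared against the body enlarged by the full packing radius rather than a fixed enlargement — equivalently, run the separated-set argument with the packing radius tuned so that disjoint radius-$\delta$ translates sit inside $(1+\delta)\ball(Z)$, giving the ratio $\big((1+\delta)/\delta\big)^{d} = (\delta^{-1}+1)^{d}$. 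This is the step where the bookkeeping of the constant, rather than any conceptual difficulty, must be done carefully; everything else is routine homogeneity and disjointness. As a sanity check, in dimension $d=1$ the interval $[-1,1]$ is covered by $\lceil \delta^{-1}\rceil \leq \delta^{-1}+1$ points, confirming the claimed form.
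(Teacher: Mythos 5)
Your maximal $\delta$-separated set construction is precisely the volume-comparison argument the paper has in mind (the paper offers no proof of its own, only the pointer to Milman--Schechtman, where this packing argument appears), and everything through the bound $|S| \leq (1+2\delta^{-1})^{\dim Z}$ is correct. The genuine gap is your final ``sharpening'' step, and it cannot be repaired as described: for a $\delta$-separated set the translates $s + \delta\,\mathrm{int}\,\ball(Z)$ are \emph{not} pairwise disjoint --- disjointness of radius-$\delta$ translates forces the centers to be $2\delta$-separated, and a maximal $2\delta$-separated set is only a $2\delta$-net, not a $\delta$-net. Nor can any rescaling tune the constant away: by homogeneity, if you pack balls of radius $\rho/2$ around centers that are $\rho$-separated (and you need $\rho = \delta$ for the net property) inside $(1+\rho/2)\ball(Z)$, the ratio is always $\bigl((1+\rho/2)/(\rho/2)\bigr)^{\dim Z} = (1+2/\rho)^{\dim Z}$; the factor $2$ is intrinsic to the half-radius packing, exactly as your own computation showed before you tried to tune it away.

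In fact no argument can reach the stated base $\delta^{-1}+1$, because the statement with that base is false in general: for the Euclidean disk ($\dim Z = 2$), Kershner's theorem gives that any covering of the plane by congruent disks has density at least $2\pi/\sqrt{27} \approx 1.209$, so covering $\ball(\ell^2_2)$ by $\delta$-balls requires at least $(1.209 - o(1))\,\delta^{-2}$ points as $\delta \to 0$, which exceeds $(\delta^{-1}+1)^2 = (1+\delta)^2\delta^{-2}$ once $\delta$ is small. You should therefore read the lemma's base as a slip for the folklore $2\delta^{-1}+1$; this is harmless for the paper, since every later use of the lemma (Proposition \ref{p:embed f.d.}, Lemma \ref{l:lift}) needs only a bound of the form $(C/\delta)^{c\,\dim Z}$, and your correctly proved bound $(1+2\delta^{-1})^{\dim Z}$ serves verbatim. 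Note also that your $d=1$ sanity check only confirms the claimed bound in a dimension where covering density $1$ is attainable; it does not test the step that fails.
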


\begin{proof}[Proof of Proposition \ref{p:embed f.d.}]
Without loss of generality, $\vr \in (0,1)$.
Let $M = 2^n \lceil 12 n^3/\vr \rceil^n$.
Let $\delta \in \vr/(2+\vr)$. By Lemma \ref{l:volume},
we can find a $\delta$-net ${\mathcal{E}}$
in $\ball(\ell^p_M \otimes E)$, with $|{\mathcal{E}}| \leq (\delta^{-1}+1)^{Mn}$.
By the proof of Theorem \ref{t:embed inj}, for every $e \in {\mathcal{E}}$
there exists a $p$-multicontraction $u_e : E \to \mx_p(\ell^{p^\prime}_M)$
so that $\|(I_{\ell^p_M} \otimes u_e)e\| = \|e\|$.
Consider
$$
U = \oplus_{e \in {\mathcal{E}}} u_e : E \to
 \ell^\infty_{|{\mathcal{E}}|}(\mx_p(\ell^{p^\prime}_M))
\, \, \, {\textrm{  and   }} \, \, \, E^\prime = U(E) .
$$
Clearly $U$ is a complete contraction. 
It suffices to show that $\|U^{-1}\|_p < 1 + \vr$.
We have $\rank \, U^{-1} = n$, hence, by Theorem \ref{t:control},
$\|U^{-1}\|_p \leq (1+\vr/3) \|I_{\ell^p_M} \otimes U^{-1}\|$.
As $1+\vr > (1+\vr/2)(1+\vr/3)$, it suffices to show that
$\|I_{\ell^p_M} \otimes U^{-1}\| \leq 1 + \vr/2$ -- or in other words, that,
if $x \in \ell^p_M \otimes E$ with $\|x\| = 1$, then
$\|(I_{\ell^p_M} \otimes U)x\| \geq (1+\vr/2)^{-1}$.
Find $e \in {\mathcal{E}}$ so that $\|x - e\| \leq \delta$. Then
$$
\|(I_{\ell^p_M} \otimes U)x\| \geq \|(I_{\ell^p_M} \otimes U)e\| - \|U\|_p \|e-x\| \geq
1 - \delta = \frac{1}{1 + \vr/2} .
\qedhere
$$
\end{proof}

Theorem \ref{t:embed inj} implies a ``subquotient representation'' result.
We equip the Banach lattice $X = \ell^\infty(I, \ell^1)$ with its canonical $p$-multinorm.

\begin{theorem}\label{t:subquotient}
For any $p$-multinormed space $X$ there exists an index set $I$
so that $X$ is $p$-multiisometric to a subspace of a quotient of the
Banach lattice $\ell^\infty(I, \ell^1)$.
\end{theorem}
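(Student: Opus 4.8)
The plan is to combine the injective-embedding result of Theorem~\ref{t:embed inj} with a realization of the ambient $\ell^\infty$-sum as a $p$-quotient of the lattice $\ell^\infty(I,\ell^1)$. By Theorem~\ref{t:embed inj} we may fix integers $(n_i)_{i\in I}$ together with a $p$-multiisometric embedding of $X$ into $Z:=\big(\sum_{i\in I}\mx_p(\ell^{p^\prime}_{n_i})\big)_\infty$. Since ``subspace of a quotient'' is exactly what is wanted, and $X\subset Z$, it suffices to produce a $p$-quotient map from $\ell^\infty(I,\ell^1)$ onto $Z$.

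First I would identify the relevant maximal structures with lattice structures. Writing $\ell^1=\big(\sum_{k\in\NN}\RR\big)_1$ and using the identity $\mx_p\big((\sum_k E_k)_1\big)=(\sum_k\mx_p(E_k))_1$ together with $\mx_p(\RR)=\RR$, one sees that $\mx_p(\ell^1)$ is precisely $\ell^1$ equipped with its canonical lattice $p$-multinorm: a direct check of the defining formulas shows the $\ell^1$-sum norm and the Krivine-calculus norm $\|(\sum_j|f_j|^p)^{1/p}\|_{\ell^1}$ coincide. Likewise, computing with the natural lattice $p$-multinorm shows that the canonical structure on $\ell^\infty(I,\ell^1)=\big(\sum_{i\in I}\ell^1\big)_\infty$ is the $\ell^\infty$-sum of the canonical structures on the summands, so that $\ell^\infty(I,\ell^1)=\big(\sum_{i\in I}\mx_p(\ell^1)\big)_\infty$ as $p$-multinormed spaces.

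Next, for each $i$ I would fix a metric surjection $\pi_i:\ell^1\to\ell^{p^\prime}_{n_i}$ (every finite-dimensional, hence separable, space is an isometric quotient of $\ell^1$). Applying the functor $\mx_p$ and using that the Banach-space projective tensor product preserves metric surjections (see \cite{Rya}), the map $I_{\ell^p_m}\projtens\pi_i:\ell^p_m\projtens\ell^1\to\ell^p_m\projtens\ell^{p^\prime}_{n_i}$ is a metric surjection for every $m$; in other words $\mx_p(\pi_i):\mx_p(\ell^1)\to\mx_p(\ell^{p^\prime}_{n_i})$ is a $p$-quotient map. I then set $Q=\big(\oplus_{i\in I}\mx_p(\pi_i)\big)_\infty:\ell^\infty(I,\ell^1)\to Z$ and claim that $Q$ is a $p$-quotient map, which finishes the proof since $X\subset Z=Q(\ell^\infty(I,\ell^1))$.

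The step requiring the most care --- and the main obstacle --- is verifying that $Q$ is itself a $p$-quotient map. Unwinding the definition of the $\ell^\infty$-sum $p$-multinorm, the map $I_{\ell^p_m}\otimes Q$ is identified with the $\ell^\infty$-sum $\big(\oplus_i(I_{\ell^p_m}\otimes\mx_p(\pi_i))\big)_\infty$ of the coordinatewise metric surjections, so the claim reduces to the Banach-space fact that an $\ell^\infty$-sum of metric surjections is again a metric surjection. This holds by a uniform lifting argument: given $y=(y_i)$ with $\sup_i\|y_i\|=r<1$, one lifts each coordinate to a preimage $x_i$ of $y_i$ with $\|x_i\|\le\|y_i\|+(1-r)/2\le(1+r)/2<1$, so that $x=(x_i)$ has norm $<1$ and maps onto $y$. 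The slack $1-r$ is what makes the bound uniform in $i$, and this is exactly the delicate point when $I$ is infinite. Assembling these observations yields the desired subquotient representation.
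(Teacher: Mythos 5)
Your proposal is correct and follows essentially the same route as the paper: embed $X$ into $\big(\sum_i \mx_p(\ell^{p^\prime}_{n_i})\big)_\infty$ via Theorem~\ref{t:embed inj}, identify $\ell^\infty(I,\ell^1)$ with $\big(\sum_i \mx_p(\ell^1)\big)_\infty$, and assemble coordinatewise quotient maps $\ell^1\to\ell^{p^\prime}_{n_i}$ (which are $p$-quotients by projectivity of $\projtens$) into an $\ell^\infty$-sum $p$-quotient map. Your uniform-slack lifting argument for the $\ell^\infty$-sum of metric surjections is a detail the paper leaves implicit, but it is the same proof.
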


\begin{proof}
Note first that the canonical lattice $p$-multinorm on $\ell^1$ coincides with
the maximal $p$-multinorm. Indeed, denote by $(\delta_i)_{i=1}^n$ and $(e_j)_{j=1}^\infty$ the
canonical bases in $\ell^p_n$ and $\ell^1$, respectively.
A generic element of $\ell^p_n \otimes \ell^1$ can be written as
$x = \sum_{i=1}^n \delta_i \otimes x_i = \sum_j a_j \otimes e_j$,
with $x_i = (t_{ij})_{j \in \NN}$ and $a_j = (t_{ij})_{i=1}^n$. Then
$$
\big\|x\big\|_{\ell^p_n \projtens \ell^1} = \big\|x\big\|_{\ell^1(\ell^p_n)} =
\sum_j \|a_j\| = \sum_j \big( \sum_i |t_{ij}|^p \big)^{1/p} = 
\big\| \big( \sum_i |x_i|^p \big)^{1/p} \big\|_{\ell^1} .
$$
The left and right hand sides represent the maximal $p$-multinorm of $x$,
and the canonical lattice $p$-multinorm of $x$, respectively.


Now recall that, for any $i \in I$, we can find a quotient map
$q_i : \ell^1 \to \ell^{p^\prime}_{n_i}$. 
By the ``projectivity'' of the projective tensor product,
$q_i : \mx_p(\ell^1) \to \mx_p(\ell^{p^\prime}_{n_i})$ is
a $p$-quotient.

By the first paragraph of the proof,
the canonical $p$-multinorm structure of the Banach lattice $\ell^\infty(I, \ell^1)$
coincides with that of $\ell^\infty(I, \mx_p(\ell^1))$. More precisely, for
$x_i \in \ell^p_n \otimes \ell^1$ ($i \in I$), we have
$$
\big\|(x_i)_{i \in I}\big\|_{\ell^p_n \otimes \ell^\infty(I, \ell^1)} =
 \sup_{i \in I} \|x_i\|_{\ell^p_n \otimes \mx_p(\ell^1)} .
$$
Furthermore,
$$
\oplus_{i \in I} q_i : \ell^\infty(I, \ell^1) \to
 \big(\sum_{i \in I} \mx_p(\ell^{p^\prime}_{n_i}) \big)_\infty
$$
is a $p$-quotient map.
\end{proof}



Moving to the local theory of $p$-multinormed spaces, we prove:

\begin{corollary}\label{c:alm inj}
Suppose $E$ is an $n$-dimensional subspace of an infinite dimensional
$p$-multinormed space $X$. Fix $\vr > 0$, and let $N = N(n,\vr)$ be as in
Proposition \ref{p:embed f.d.}. Then there exists an $N^2$-codimensional subspace
$Y \subset X$, containing $E$, so that there exists a projection $P$ from $Y$
onto $E$, with $\|P\|_p < 1+\vr$.
\end{corollary}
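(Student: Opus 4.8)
The plan is to combine the almost-isometric finite-dimensional embedding from Proposition \ref{p:embed f.d.} with the $1$-injectivity of the target space, and then to pull the target subspace back to $X$. First I would apply Proposition \ref{p:embed f.d.} to $E$, obtaining the integer $N = N(n,\vr)$, the finite-dimensional $p$-multinormed space $Z = \ell^\infty_N(\mx_p(\ell^{p^\prime}_N))$, and a $p$-multicontraction $U : E \to Z$ whose image $E^\prime = U(E)$ satisfies $\|U^{-1}\|_p < 1 + \vr$. Note that $\dim Z = N^2$, and that $Z$ is $1$-injective: it is a finite $\ell^\infty$-sum of copies of $\mx_p(\ell^{p^\prime}_N) = \mx_p(L^{p^\prime}(\mu))$ for $\mu$ the counting measure on $N$ points, so Theorem \ref{t:gen inj} applies.

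Since $E \subset X$ and $Z$ is $1$-injective, the $p$-multicontraction $U : E \to Z$ extends to a map $\tilde{U} : X \to Z$ with $\|\tilde{U}\|_p \leq 1$. I would then set
$$
Y = \tilde{U}^{-1}(E^\prime) = \{ x \in X : \tilde{U}x \in E^\prime \} .
$$
Because $\tilde{U}|_E = U$ carries $E$ onto $E^\prime$, we have $E \subseteq Y$. The map induced by $\tilde{U}$ from $X/Y$ to $Z/E^\prime$ is injective, whence
$$
\mathrm{codim}_X Y \leq \dim(Z/E^\prime) = N^2 - n \leq N^2 .
$$

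The desired projection is then $P = U^{-1} \tilde{U}|_Y : Y \to E$, which is well defined since $\tilde{U}(Y) \subseteq E^\prime$ and $U^{-1} : E^\prime \to E$. For $e \in E$ one has $Pe = U^{-1}Ue = e$, so the range of $P$ is exactly $E$ and $P$ fixes $E$; hence $P$ is a projection of $Y$ onto $E$. Using submultiplicativity of the $p$-multinorm, together with the facts that restricting the domain of $\tilde{U}$ to the $p$-multinormed subspace $Y$ and its codomain to the $p$-multinormed subspace $E^\prime$ does not increase the $p$-norm, I obtain
$$
\|P\|_p \leq \|U^{-1}\|_p \, \|\tilde{U}\|_p < (1+\vr) \cdot 1 = 1 + \vr ,
$$
which is the assertion.

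Every ingredient here is either cited or routine, so I do not anticipate a serious obstacle; the two places demanding care are, first, the verification that $Z$ genuinely falls under Theorem \ref{t:gen inj} (identifying $\ell^{p^\prime}_N$ with $L^{p^\prime}$ of a finite measure space and the $N$-fold $\ell^\infty$-sum with the sum appearing there), and second, the bookkeeping showing that the composition $U^{-1}\tilde{U}|_Y$ is legitimate with the claimed norm bound — that is, that $\tilde{U}(Y)$ really lands in the domain $E^\prime$ of $U^{-1}$ and that passing to the subspaces $E^\prime \subset Z$ and $Y \subset X$ leaves the relevant $p$-multinorms unchanged. The hypothesis that $X$ is infinite dimensional serves only to make the codimension-$N^2$ conclusion meaningful and is not otherwise used.
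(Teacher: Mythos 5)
Your argument is correct and is essentially identical to the paper's own proof: apply Proposition \ref{p:embed f.d.}, extend $U$ to all of $X$ using the $1$-injectivity of $\ell^\infty_N(\mx_p(\ell^{p^\prime}_N))$ from Theorem \ref{t:gen inj}, take $Y = \tilde{U}^{-1}(E^\prime)$, and set $P = U^{-1}\tilde{U}|_Y$ with the same norm estimate. The extra details you supply (identifying $\ell^{p^\prime}_N$ with $L^{p^\prime}$ of a counting measure and bounding the codimension via the induced injection $X/Y \to Z/E^\prime$) are accurate and only make the argument more explicit.
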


\begin{proof}
By Proposition \ref{p:embed f.d.}, there exists a surjective $p$-multicontraction
$u : E \to E^\prime \subset \ell^\infty_N(\mx_p(\ell^{p^\prime}_N))$ so that $\|u^{-1}\|_p < 1 + \vr$.
By Theorem \ref{t:gen inj}, $u$ has a $p$-mul\-ti\-con\-trac\-tive extension
$\tilde{u} : E \to \ell^\infty_N(\mx_p(\ell^{p^\prime}_N))$. Let $Y = \tilde{u}^{-1}(E^\prime)$
(the preimage of $E^\prime$). Then
$\dim X/Y = \dim \ell^\infty_N(\mx_p(\ell^{p^\prime}_N))/E^\prime < N^2$.
Further, $P = u^{-1} \tilde{u}|_Y$ is a projection from $Y$ onto $E$, and
$\|P\|_p \leq \|u^{-1}\|_p \|\tilde{u}\|_p < 1 + \vr$.
\end{proof}

Taking a clue from the theory of Banach (or operator) spaces, we say that a sequence
of non-zero linearly independent elements of $(x_i)_{i=1}^\infty$ a $p$-multinormed space $X$ is a
{\emph{$p$-basic sequence}} if $\sup_n \|P_n\|_p < \infty$. Here $P_n$ is the
$n$-th basic projection: for any finite family $(\alpha_i)$, we set
$P_n (\sum_i \alpha_i x_i) = \sum_{i \leq n} \alpha_i x_i$.
Via a standard argument, Corollary \ref{c:alm inj} implies:

\begin{corollary}\label{c:basis}
Suppose $X$ is an infinite dimensional $p$-multinormed space, and $(\vr_i)_{i=1}^\infty$
is a sequence of positive numbers. Then $X$ contains a sequence of norm one
linearly independent vectors $(x_i)_{i=1}^\infty$ with the property that, for every $n$,
we have $\|P_n\|_p \leq 1 + \vr_n$, where $P_n$ is defined as above.
\end{corollary}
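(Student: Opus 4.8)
The plan is to construct the $p$-basic sequence inductively, using Corollary \ref{c:alm inj} at each step to control the basic projections. The key observation is that Corollary \ref{c:alm inj} provides, for any finite-dimensional subspace $E \subset X$, a projection onto $E$ with $p$-norm arbitrarily close to $1$, defined on a finite-codimensional subspace $Y \supset E$. This is precisely the tool needed to build a basis with nearly contractive partial-sum projections, mirroring the classical Mazur construction of basic sequences in Banach spaces.

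First I would fix a rapidly decreasing sequence of tolerances: choose positive numbers $(\eta_i)$ so that the accumulated products $\prod_{i>n}(1+\eta_i)$ stay below $1+\vr_n$ for each $n$, which is possible since $X$ is infinite dimensional and the $\vr_n$ are positive. I would then select the vectors $x_1, x_2, \ldots$ one at a time. Having chosen linearly independent $x_1, \ldots, x_n$ spanning an $n$-dimensional subspace $E_n$, apply Corollary \ref{c:alm inj} with $\vr$ replaced by a suitable $\eta_n$ to obtain a finite-codimensional subspace $Y_n \supset E_n$ and a projection $P_{E_n} : Y_n \to E_n$ with $\|P_{E_n}\|_p < 1+\eta_n$. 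I would then choose $x_{n+1}$ to be a norm-one vector lying in $Y_n$ but outside $E_n$ (possible because $Y_n$ is finite-codimensional, hence infinite dimensional, whereas $E_n$ is finite dimensional); this guarantees linear independence and, crucially, that the basic projection $P_n$ onto $\spn[x_1,\ldots,x_n]$ agrees with the restriction of $P_{E_n}$ to later terms.

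The heart of the argument is relating the basic projections $P_n$ to the projections $P_{E_n}$ supplied by the corollary. The standard trick is to arrange the construction so that for each $m > n$, the tail vector $x_m$ lies in the kernel of $P_{E_n}$ (or at least is controlled by a telescoping product of the $P_{E_k}$). Concretely, one shows by a telescoping estimate that the $n$-th basic projection factors, up to the accumulated error, through the composition of the finitely many $P_{E_k}$ for $k \geq n$, so that $\|P_n\|_p \leq \prod_{k \geq n}(1+\eta_k) < 1+\vr_n$. I would verify this bound by an induction that keeps track of how $P_n$ acts on a general finite linear combination, using the submultiplicativity $\|uv\|_p \leq \|u\|_p \|v\|_p$ of the $p$-multinorm and the nesting $E_n \subset Y_n$.

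The main obstacle I anticipate is ensuring the compatibility of the projections across stages, namely that the finite-codimensional subspaces $Y_n$ and the newly chosen vectors $x_{n+1} \in Y_n$ fit together so that each basic projection $P_n$ really is dominated by the product of the corresponding $P_{E_k}$'s. One must choose $x_{n+1}$ not merely in $Y_n$ but so that it sits inside all the relevant kernels $\ker P_{E_k}$ for $k \leq n$ simultaneously; since these impose only finitely many finite-codimensional constraints, a suitable $x_{n+1}$ exists, but the bookkeeping required to make the telescoping estimate clean is the delicate point. Once that is set up correctly, the bound $\|P_n\|_p \leq 1+\vr_n$ follows directly from the choice of the $\eta_i$, and the resulting $(x_i)$ is the desired $p$-basic sequence.
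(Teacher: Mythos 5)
Your proposal is correct and is precisely the ``standard argument'' the paper invokes: apply Corollary \ref{c:alm inj} inductively to get $Y_n \supset E_n$ and $P_{E_n}$ with $\|P_{E_n}\|_p < 1+\eta_n$, pick a norm-one $x_{n+1} \in Y_n \cap \ker P_{E_n}$, and use submultiplicativity on the exact factorization $P_n|_{E_m} = P_{E_n}\circ\cdots\circ P_{E_{m-1}}|_{E_m}$. The compatibility issue you flag at the end is not actually a problem: since $P_{E_k}$ restricted to $E_{k+1}\subset Y_k$ is \emph{exactly} the basic projection onto $E_k$ once $P_{E_k}x_{k+1}=0$, the telescoping composition needs only the single kernel condition at each step (and never requires $x_m$ to lie in $Y_n$ or $\ker P_{E_n}$ for $n<m-1$), so the bound $\|P_n\|_p \le \prod_{k\ge n}(1+\eta_k) \le 1+\vr_n$ follows with no further bookkeeping.
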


The corresponding Banach space result is well known. However, there exist operator
spaces without a complete basic sequence \cite{OiRi}.

\section{Projectivity}\label{s:projectivity}


\begin{proposition}\label{p:l^p_n proj}
For $1 \leq p \leq \infty$, $\mn_p(\ell^{p^\prime}_n)$ is $1$-projective as a $p$-mul\-ti\-nor\-med space.
\end{proposition}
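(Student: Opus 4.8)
The plan is to obtain projectivity of $\mn_p(\ell^{p^\prime}_n)$ from injectivity of its dual, exploiting the standard duality between the two notions. The first step is to identify the dual: using $(\mn_p(E))^* = \mx_{p^\prime}(E^*)$ together with $(\ell^{p^\prime}_n)^* = \ell^p_n$, one finds $\big(\mn_p(\ell^{p^\prime}_n)\big)^* = \mx_{p^\prime}(\ell^p_n)$. Applying Proposition \ref{p:max p'} with the conjugate exponent $p^\prime$ in place of $p$ (so that $(p^\prime)^\prime = p$ and $\mx_{p^\prime}(\ell^{(p^\prime)^\prime}_n) = \mx_{p^\prime}(\ell^p_n)$), this space is $1$-injective as a $p^\prime$-multinormed space. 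Thus $Z := \mn_p(\ell^{p^\prime}_n)$ has $1$-injective dual $Z^*$.

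The second step is to set up the lifting problem and dualize it. Let $q : Y \to X$ be a $p$-quotient map and $u : Z \to X$ a $p$-multibounded operator. Since the dual of a metric surjection is an isometric embedding at every tensor level $\ell^p_n \otimes (\cdot)$ — here we use the identification $(\ell^p_n \otimes X)^* = \ell^{p^\prime}_n \otimes X^*$ from the preliminaries — the adjoint $q^* : X^* \to Y^*$ is a $p^\prime$-multiisometric embedding, and $\|u^*\|_{p^\prime} = \|u\|_p$. Viewing $X^*$ as a subspace of $Y^*$ via $q^*$ and invoking the $1$-injectivity of $Z^*$, the map $u^* : X^* \to Z^*$ extends to some $\tilde v : Y^* \to Z^*$ with $\tilde v q^* = u^*$ and $\|\tilde v\|_{p^\prime} \leq \|u\|_p$. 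Dualizing and using $Z^{**} = Z$ (as $Z$ is finite dimensional), I obtain $\tilde v^* : Z \to Y^{**}$ with $\|\tilde v^*\|_p \leq \|u\|_p$ and $q^{**}\tilde v^* = \kappa_X u$, where $\kappa_X : X \to X^{**}$ is the canonical embedding; in particular $q^{**}$ carries each $\tilde v^*(z)$ into the copy $\kappa_X(X)$ of $X$.

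The remaining, and main, obstacle is the descent from $Y^{**}$ to $Y$: I must replace the bidual-valued $\tilde v^*$ by a genuine lifting $\tilde u : Z \to Y$ satisfying $q\tilde u = u$ exactly, at the cost of enlarging the $p$-multinorm by at most $\vr$. The decisive feature is that $\tilde v^*$ has rank $n$. By Theorem \ref{t:control}, for rank $n$ operators the $p$-multinorm is controlled, up to a factor $1+\vr$, by the single tensor norm on $\ell^p_M \otimes (\cdot)$ with $M = M(n,\vr)$, so the $p$-multinorm becomes essentially continuous under small finite-rank perturbations. I would therefore fix such an $M$, and use Goldstine's theorem together with the principle of local reflexivity — applied to the finite-dimensional subspace $\tilde v^*(Z) \subset Y^{**}$ and to finitely many functionals drawn from $q^*(X^*)$ — to produce an approximate lift $z \mapsto y \in Y$ whose $\ell^p_M$-tensor norm is within $\vr$ of that of $\tilde v^*$ and for which $q y$ is close to $u z$. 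Finally, since $q$ is a $p$-multinorm surjection, I would correct each lift by a small element of $Y$ to enforce $q\tilde u = u$ exactly; by the continuity afforded by Theorem \ref{t:control} this correction perturbs $\|\tilde u\|_p$ only negligibly, giving $\|\tilde u\|_p \leq \|u\|_p + \vr$ and completing the proof. The delicacy of combining exact lifting through $q$ with $p$-multinorm (rather than merely norm) control is exactly where I expect the technical work to concentrate.
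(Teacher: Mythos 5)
Your opening moves are sound and in fact coincide with the paper's underlying mechanism: $(\mn_p(\ell^{p^\prime}_n))^* = \mx_{p^\prime}(\ell^p_n)$ is $1$-injective as a $p^\prime$-multinormed space by Proposition \ref{p:max p'}, the adjoint of a $p$-quotient map is a $p^\prime$-multiisometric embedding, and dualizing the resulting extension produces $\tilde v^* : \mn_p(\ell^{p^\prime}_n) \to Y^{**}$ with $\|\tilde v^*\|_p \leq \|u\|_p$ and $q^{**}\tilde v^* = \kappa_X u$. The genuine gap is the descent from $Y^{**}$ to $Y$, which is where you place "the technical work" but for which you supply no actual mechanism. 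The classical principle of local reflexivity controls only the Banach space norm of an operator $T$ from a finite-dimensional subspace $S \subset Y^{**}$ into $Y$; to invoke Theorem \ref{t:control} on the composite $T\tilde v^*$ you must control $\|I_{\ell^p_M} \otimes T\|$, where $\ell^p_M \otimes S$ carries the multinorm inherited from $Y^{**}$ and $\ell^p_M \otimes Y$ its own. That is a \emph{multinorm-compatible} local reflexivity statement, which is neither proved in the paper nor part of the standard toolkit; similarly, the Goldstine-type claim you need --- that $\ball(\ell^p_M \otimes Y)$ is weak$^*$ dense in $\ball(\ell^p_M \otimes Y^{**})$ with the second-dual multinorm, in a way that can be upgraded to a norm-controlled, exactly commuting lift --- is itself the hard content, not a citation. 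The paper even warns (via Corollary \ref{c:gliding} and Proposition \ref{p:double dual}) that in the infinite-dimensional analogue the passage to the second dual cannot in general be undone, so the descent cannot be treated as routine.

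The paper sidesteps the bidual entirely by localizing \emph{before} dualizing. Lemma \ref{l:lift} (whose proof carries the real weight: a $\vr$-net in $\ball(\ell^p_L \otimes E)$, a telescoping-series lifting, an Auerbach basis, and the sublattice approximation of Lemma \ref{l:l_p f.d.}) produces a finite-dimensional subspace $F$ of the ambient space such that every $e \in \ell^p_m \otimes E$, where $E$ is the $n$-dimensional range of $u$ in the quotient, lifts into $\ell^p_m \otimes F$ with norm at most $(1+\vr)\|e\|$. Writing $v$ for the restriction of the quotient map to $F$, the inverse of $v^*$ on its range has $p^\prime$-multinorm at most $1+\vr$, injectivity of $\mx_{p^\prime}(\ell^p_n)$ extends $u^* v^{*-1}$ to $w : F^* \to \mx_{p^\prime}(\ell^p_n)$, and $\tilde u = w^*$ lands in $F^{**} = F$, hence in the ambient space itself, because $F$ is finite dimensional. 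In short: you dualize first and then need an unproven local reflexivity theorem to come back; the paper localizes first so that no return trip from a bidual is ever required. The missing idea in your proposal is precisely Lemma \ref{l:lift}.
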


The following auxiliary result may be of independent interest.

\begin{lemma}\label{l:lift}
For $n \in \NN$ and $\vr > 0$, there exists $M = M(n,\vr)\in \NN$
with the following property. Suppose $Y$ is a subspace of a $p$-multinormed space $X$,
and $q : X \to X/Y$ is the corresponding quotient map. If $E$ is an $n$-dimensional subspace of $X/Y$,
then there exists a subspace $F \subset X$ with $\dim F \leq M$ so that, for any $m$ and any
$e \in \ell^p_m \otimes E$, there exists $x \in \ell^p_m \otimes F$ so that
$\|x\| \leq (1+\vr)\|e\|$, and $(I_{\ell^p_m} \otimes q)x = z$.
\end{lemma}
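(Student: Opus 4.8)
The plan is to construct a single finite-dimensional subspace $F \subset X$ with $q(F) = E$ by lifting at one carefully chosen level $m = M_0$, and then to promote the estimate from that level to all $m$ using the exactness statement of Theorem \ref{t:control}. The device that makes a single level suffice is the following identification. Once $F$ is chosen with $q(F) = E$, put $N = F \cap Y = \ker(q|_F)$ and let $\iota : F/N \to E$ be the linear bijection induced by $q$; equip $F/N$ with the quotient $p$-multinorm coming from $F \subset X$. Since amplified quotient maps remain quotient maps of Banach spaces at every level, for each $m$ and each $e \in \ell^p_m \otimes E$ one has
\[
\big\|(I_{\ell^p_m} \otimes \iota^{-1})e\big\|_{\ell^p_m \otimes (F/N)} = \inf\big\{ \|x\|_{\ell^p_m \otimes F} : (I_{\ell^p_m} \otimes q)x = e \big\} .
\]
Thus the optimal lifting constant at level $m$ is exactly $\|I_{\ell^p_m} \otimes \iota^{-1}\|$, and the whole lemma reduces to making $\|\iota^{-1}\|_p \leq 1 + \vr$. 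Crucially, $\iota^{-1} : E \to F/N$ is injective on the $n$-dimensional space $E$, hence has rank $n$.

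Because $\iota^{-1}$ has rank $n$, Theorem \ref{t:control} applies directly: fixing $\vr_2$ and $M_2 = 2^n \lceil 4n^3/\vr_2 \rceil^n$, we obtain $\|\iota^{-1}\|_p \leq (1+\vr_2)\,\|I_{\ell^p_{M_2}} \otimes \iota^{-1}\|$. In view of the identification above, it therefore suffices to arrange that, at the single level $m = M_2$, every $e \in \ell^p_{M_2} \otimes E$ lifts into $\ell^p_{M_2} \otimes F$ with norm at most $(1+\vr_1)\|e\|$, where $\vr_1, \vr_2$ are chosen so that $(1+\vr_1)(1+\vr_2) \leq 1 + \vr$. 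This is the reduction to a fixed finite level, and it is exactly what keeps the eventual dimension of $F$ bounded.

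To achieve the level-$M_2$ estimate, recall that $I_{\ell^p_{M_2}} \otimes q$ is a quotient map of Banach spaces onto $\ell^p_{M_2} \otimes (X/Y)$, which contains the finite-dimensional ball $\ball(\ell^p_{M_2} \otimes E)$. Choose $\delta \in (0,1)$ and a lifting constant so that the resulting quotient modulus is at most $1 + \vr_1$, take a $\delta$-net in $\ball(\ell^p_{M_2} \otimes E)$ of cardinality at most $(\delta^{-1}+1)^{M_2 n}$ by Lemma \ref{l:volume}, and lift each net point. Let $F$ be the span in $X$ of the finitely many $X$-coordinates occurring in these lifts; then $\dim F$ is bounded by $M_2$ times the net cardinality, a quantity depending only on $n$ and $\vr$, which furnishes $M = M(n,\vr)$. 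A standard successive-approximation argument then shows that every $e \in \ell^p_{M_2} \otimes E$ lifts into $\ell^p_{M_2} \otimes F$ with the required $(1+\vr_1)$-control; taking $e = \delta_1 \otimes \xi$ with $\xi \in E$ shows in particular that $q(F) = E$, so that $\iota$ is genuinely a bijection and the factorization above is legitimate.

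The main obstacle is the identification in the first paragraph: one must verify that the least-norm lift at level $m$ is governed by the single rank-$n$ operator $\iota^{-1}$, so that the uniform-in-$m$ control can be handed over to Theorem \ref{t:control} rather than established by hand for each $m$. This rests on two facts from the theory of quotient $p$-multinorms — that $I_{\ell^p_m} \otimes q$ and $I_{\ell^p_m} \otimes \pi$ (for the quotient map $\pi : F \to F/N$) are quotient maps of Banach spaces at every level $m$ — together with the bookkeeping that keeps $\dim F$ bounded purely in terms of $n$ and $\vr$.
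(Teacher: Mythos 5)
Your proof is correct, and for the crucial ``uniform in $m$'' step it takes a genuinely different route from the paper's. Both arguments start the same way: a $\delta$-net in the unit ball of $\ell^p\otimes E$ at one fixed finite level, liftings of the net points, $F$ the span of their coordinates, and a telescoping series giving the lifting estimate at that single level (this also forces $q(F)=E$, and the reverse inclusion $q(F)\subseteq E$ is automatic since every coordinate of every lift $z(e)$ maps into $E$, so $\iota$ is indeed a bijection). Where the paper then promotes the estimate to arbitrary $m$ by hand --- writing $e=\sum_k a_k\otimes u_k$ over an Auerbach basis of $E$, using Lemma \ref{l:l_p f.d.} to find a sublattice $Z\subset\ell^p_m$ of dimension at most $L$ nearly containing $\spn[a_1,\ldots,a_n]$ and equal to the range of a positive contractive projection $P$, and splitting $e=(P\otimes I_E)e+\sum_k(a_k-Pa_k)\otimes u_k$ with the two pieces lifted separately --- you instead note that, with $N=F\cap Y$ and $\iota:F/N\to E$ the induced bijection, the optimal lifting constant at level $m$ equals $\|I_{\ell^p_m}\otimes\iota^{-1}\|$, so the whole lemma reduces to $\|\iota^{-1}\|_p\leq 1+\vr$, which Theorem \ref{t:control} (applicable since $\iota^{-1}$ has rank $n$, and proved earlier, so there is no circularity) pulls down to the single level $M_2$. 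Your identification is sound: $(I_{\ell^p_m}\otimes q)|_{\ell^p_m\otimes F}=(I_{\ell^p_m}\otimes\iota)(I_{\ell^p_m}\otimes\pi)$, the quotient multinorm on $F/N$ is the infimum over lifts in $\ell^p_m\otimes F$, and in finite dimensions that infimum is attained, so the operator-norm bound really does yield lifts of the required norm. What your version buys is economy: the sublattice-approximation machinery is not repeated but absorbed into Theorem \ref{t:control} as a black box (whose proof runs on exactly that machinery), at the modest cost of having to set up the quotient space $F/N$ and verify the factorization.
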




\begin{proof}[Poof of Lemma \ref{l:lift}]
Without loss of generality, $0 < \vr < 1/2$.
Set $n = \dim E$, and $L = L(n,\vr) = 2^n \lceil 16n^3/\vr \rceil^n$.
Find a $\vr/6$-net ${\mathcal{E}}$ in the unit ball of $\ell^p_L \otimes E$,
with $|{\mathcal{E}}| \leq (1+6/\vr)^{nL}$.
For each $e \in {\mathcal{E}}$ find a lifting $z(e) = \sum_{j=1}^L \delta_j \otimes x_j(e)
 \in \ell^p_L \otimes X$, with $\|z(e)\| \leq (1+\vr/9) \|e\|$ 
(here $(\delta_j)_{j=1}^L$ denotes the canonical basis of $\ell^p_L$).
Then the dimension of $F = \spn[x_j(e) : e \in {\mathcal{E}}, 1 \leq j \leq L]$
doesn't exceed $M(n,\vr) = L (1+6/\vr)^{nL}$. We have to show that any
$e \in \ball(\ell^p_m \otimes E)$ lifts to $x \in \ell^p_m \otimes F$ with $\|x\| \leq 1+\vr$.
 
A ``telescoping sum'' argument show that any
$e \in \ball(\ell^p_M \otimes E)$ possesses a lifting
$x \in \ell^p_M \otimes F$ with $\|x\| \leq 1 + \vr/3$.
Indeed, inductively we can find a sequence $e_0, e_1, \ldots \in {\mathcal{E}}$
so that, for any $k$,
$$
\Big\|e - e_0 - \frac{\vr}{6} e_1 - \ldots - \Big( \frac{\vr}{6} \Big)^{k-1} e_{k-1}\Big\|
 \leq \Big( \frac{\vr}{6} \Big)^k .
$$
Then $e$ lifts to $\sum_{k=0}^\infty (\vr/6)^k z(e_k)$, which belongs
to $\ell^p_M \otimes F$, and has norm not exceeding
$(1+ \vr/9) (1 - \vr/6)^{-1} \leq 1 = \vr/3$.

Now fix $m \in \NN$, and consider $e \in \ball(\ell^p_m \otimes E)$.
Denote by $(u_k)_{k=1}^n$ a normalized Auerbach basis in $E$.
Write $x = \sum_{k=1}^n a_k \otimes u_k$, where $\|a_k\| \leq \|x\| \leq 1$.
Use Lemma \ref{l:l_p f.d.} to find a sublattice $Z \subset \ell^p_m$,
with the following properties:
\begin{enumerate}
\item
$K = \dim Z \leq L$, where $L$ is defined above.
\item 
$Z$ is spanned by disjoint elements of $\ell^p_m$, hence it is isometric to
$\ell^p_K$. Moreover, $Z$ is the range of a conditional expectation $P$.
\item
For every $e \in \spn[a_1, \ldots, a_n]$,
$\|e - Pe\| \leq \vr/(4n)$.
\end{enumerate}


Let $e_1 = \sum_k P a_k \otimes u_k = (P \otimes I_E) e$.
As $P$ is positive and contractive, $\|e_1\| \leq \|e\| \leq 1$.
Identifying $Z \otimes E$ with $\ell^p_K \otimes E$ (as in the proof of Lemma \ref{l:sublattice}),
we conclude that $e_1$ has a lifting
$x_1 \in Z \otimes F \subset \ell^p_m \otimes F$, with $\|x_1\| \leq 1+\vr/3$.

Now set $e_2 = \sum_k (a_k - P a_k) \otimes u_k$. Viewing $u_k$ as an element
of $\ell^p_1 \otimes E \subset \ell^p_M \otimes E$, we conclude that $u_k$ has a lifting
$\tilde{u}_k \in F$, with $\|\tilde{u}_k\| \leq 1 + \vr/3$. Then $e_2$ lifts to
$x_2 = \sum_k (a_k - Pa_k) \otimes \tilde{u}_k$, with norm
$$
\|x_2\| \leq \sum_k \|a_k - Pa_k\| \|\tilde{u}_k\| \leq n \frac{\vr}{4n} \Big( 1 + \frac{\vr}{3} \Big)
 < \frac{\vr}{2} .
$$
Thus $e$ lifts to $x = x_1 + x_2$, of norm not exceeding $\|x_1\| + \|x_2\| < 1+\vr$.
\end{proof}

\begin{proof}[Proof of Proposition \ref{p:l^p_n proj}]
We have to show that, for any pair of $p$-mul\-ti\-nor\-med spaces $Y \subset X$,
any $\vr > 0$, and any $u \in B(\ell^p_n, X/Y)$, there exists a lifting
$\tilde{u} : \ell^p_n \to X$ with $\|\tilde{u}\|_p \leq (1+\vr)\|u\|_p$.
In the case when $X$ is finite dimensional, the existence of such a lifting
(with $\|\tilde{u}\|_p = \|u\|_p$) follows by duality from Proposition \ref{p:max p'}.

If $X$ is infinite dimensional, denote the range of $u$ by $E$.
Let $q : X \to X/Y$ be the quotient map. Fix $\vr > 0$. By Lemma \ref{l:lift},
we can find a finite dimensional $F$ so that, for every $e \in \ell^p_n \otimes E$,
there exists $f \in \ell^p_n \otimes F$ with $(I_{\ell^p_n} \otimes q) f = e$,
and $\|f\| \leq (1+\vr) \|e\|$. Denote the restriction of $q$ to $F$ by $v$.

Then $v^* : E^* \to F^*$ is a contraction. Set $v^*(E^*) = G \subset F^*$,
then $v^{*-1} : G \to E^*$ has $p^\prime$-multinorm not exceeding $1+\vr$. The operator
$u^* v^{*-1} : G \to \mx_{p^\prime}(\ell^p_n)$ extends to
$w : F^* \to \mx_{p^\prime}(\ell^p_n)$, with
$\|w\|_{p^\prime} \leq \|u^*\|_{p^\prime} \|v^{*-1}\|_{p^\prime} \leq 1 + \vr$.
Then $\tilde{u} = w^*$ has the desired properties.
\end{proof}


\begin{corollary}\label{c:l^p proj}
If $(n_i)_{i \in I}$ is a family of integers, then
$(\sum_{i \in I} \mn_p(\ell^{p^\prime}_{n_i}))_1$
is $1$-projective as a $p$-multinormed space.
\end{corollary}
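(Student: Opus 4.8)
The plan is to deduce the corollary from the $1$-projectivity of the individual summands $\mn_p(\ell^{p^\prime}_{n_i})$, established in Proposition \ref{p:l^p_n proj}, together with the ``coproduct'' behaviour of the $\ell^1$-sum under $p$-multibounded maps. This is the exact projective counterpart of the way Theorem \ref{t:gen inj} follows from Corollary \ref{c:inj Lp} in the injective setting: there one extends into each $\ell^\infty$-coordinate and takes a supremum, whereas here one lifts out of each $\ell^1$-coordinate and sums.

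The first step is to record the relevant universal property of the $\ell^1$-sum. Write $Z = (\sum_{i \in I} Z_i)_1$ with $Z_i = \mn_p(\ell^{p^\prime}_{n_i})$, and let $\iota_i : Z_i \to Z$ be the canonical inclusions (each satisfies $\|I_{\ell^p_n} \otimes \iota_i\| = 1$). I claim that for any $p$-multinormed space $W$ and any family $v_i : Z_i \to W$ with $\sup_i \|v_i\|_p < \infty$, the formula $v((z_i)_i) = \sum_i v_i z_i$ defines a $p$-multibounded map $v : Z \to W$ with $\|v\|_p = \sup_i \|v_i\|_p$. Convergence of the sum is immediate from $\sum_i \|z_i\| < \infty$, and $\|v\|_p \geq \sup_i \|v_i\|_p$ follows at once from $v_i = v\iota_i$. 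The reverse inequality is where the $\ell^1$-structure enters: by the definition of the $\ell^1$-sum recalled in Section \ref{s:basic}, an element of $\ell^p_n \otimes Z$ is a family $(x_i)_i$ with $x_i \in \ell^p_n \otimes Z_i$ and norm $\sum_i \|x_i\|$, so
$$
\big\|(I_{\ell^p_n} \otimes v)(x_i)_i\big\| = \big\| \sum_i (I_{\ell^p_n} \otimes v_i) x_i \big\| \leq \sum_i \|v_i\|_p \|x_i\| \leq \big(\sup_i \|v_i\|_p\big) \sum_i \|x_i\| .
$$

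With this in hand the lifting is routine. Fix $\vr > 0$, a $p$-quotient map $q : Y \to X$, and a $p$-multibounded $u : Z \to X$. Set $u_i = u \iota_i : Z_i \to X$, so that $\|u_i\|_p \leq \|u\|_p$. Since each $Z_i = \mn_p(\ell^{p^\prime}_{n_i})$ is $1$-projective by Proposition \ref{p:l^p_n proj}, I can lift each $u_i$ through $q$ to some $\tilde{u}_i : Z_i \to Y$ with $q\tilde{u}_i = u_i$ and $\|\tilde{u}_i\|_p \leq \|u_i\|_p + \vr \leq \|u\|_p + \vr$, using the \emph{same} $\vr$ for every $i$. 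Applying the universal property with $W = Y$ and $v_i = \tilde{u}_i$ yields $\tilde{u} : Z \to Y$ with $\|\tilde{u}\|_p = \sup_i \|\tilde{u}_i\|_p \leq \|u\|_p + \vr$; and $q\tilde{u} = u$, since for $(z_i)_i \in Z$ continuity of $q$ together with absolute convergence gives $q\tilde{u}((z_i)_i) = \sum_i q\tilde{u}_i z_i = \sum_i u_i z_i = u((z_i)_i)$.

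I do not expect any single step to be a serious obstacle: the real content has already been expended in Proposition \ref{p:l^p_n proj}, and what remains is bookkeeping for the $\ell^1$-sum. The only point genuinely requiring care is the norm identity $\|v\|_p = \sup_i \|v_i\|_p$, i.e.\ that tensoring with $\ell^p_n$ commutes with the formation of the $\ell^1$-sum; once that is granted, the uniform choice of $\vr$ across all summands is exactly what keeps the norm of the assembled lifting under control.
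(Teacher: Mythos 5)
Your argument is correct and is exactly the routine deduction the paper leaves implicit: it mirrors the paper's own treatment of the analogous Corollary \ref{c:L^p proj}, where the lifting is likewise assembled summand-by-summand as $\tilde{u} = \oplus_i \tilde{u}_i$ after invoking projectivity of each coordinate. The one point needing care --- that the $\ell^1$-sum's multinorm $\|x\| = \sum_i \|x_i\|$ on $\ell^p_n \otimes Z$ forces $\|\oplus_i \tilde{u}_i\|_p = \sup_i \|\tilde{u}_i\|_p$ --- is handled properly, so nothing is missing.
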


Next we consider liftings to the second dual. For a $p$-multinormed space $Z$
(just as in the Banach space case) we have a $p$-multiisometric embedding
$\kappa_Z : Z \to Z^{**}$ (see e.g.~\cite{DLOT}).

\begin{proposition}\label{p:double dual}
Suppose 
$Y \subset X$ are $p$-multinormed spaces, $q : X \to X/Y$ is a quotient map,
$\mu$ is a $\sigma$-finite measure, and $u : \mn_p(L^{p^\prime}(\mu)) \to X$ is $p$-multibounded.
Then there exists $\tilde{u} : \mn_p(L^{p^\prime}(\mu)) \to X^{**}$ so that $\|\tilde{u}\|_p = \|u\|_p$,
and $\kappa_{X/Y} u = q^{**} \tilde{u}$.
\end{proposition}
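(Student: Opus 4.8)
The plan is to reduce this bidual lifting to an extension problem for the \emph{adjoint} map, solve the latter using the injectivity results of Section~\ref{s:inj mult}, and then dualize back. Write $Z = \mn_p(L^{p^\prime}(\mu))$ and $W = X/Y$, so that $u : Z \to W$ is the given $p$-multibounded map and the asserted identity reads $q^{**}\tilde u = \kappa_{W} u$. The starting point is the duality dictionary from the preliminaries: passing to adjoints is a $p \leftrightarrow p^\prime$ isometry (the duality between $\ell^p_n \otimes X$ and $\ell^{p^\prime}_n \otimes X^*$ identifies $I_{\ell^p_n}\otimes u$ with $I_{\ell^{p^\prime}_n}\otimes u^*$), so $u^* : W^* \to Z^*$ satisfies $\|u^*\|_{p^\prime} = \|u\|_p$, and by the formula $(\mn_p(E))^* = \mx_{p^\prime}(E^*)$ we have
$$
Z^* = \big(\mn_p(L^{p^\prime}(\mu))\big)^* = \mx_{p^\prime}\big((L^{p^\prime}(\mu))^*\big) .
$$
For $1 < p \le \infty$ this is exactly $\mx_{p^\prime}(L^p(\mu))$, which, by Corollary~\ref{c:inj Lp} applied with $p^\prime$ in place of $p$, is a $1$-injective $p^\prime$-multinormed space.

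Next I would realize $W^*$ inside $X^*$ and invoke injectivity. The adjoint $q^* : W^* \to X^*$ of the quotient map is a $p^\prime$-multiisometry onto the annihilator $Y^\perp$ (the subspace/quotient duality of \cite{DLOT}), so $u^* \circ (q^*)^{-1}$ is a $p^\prime$-multibounded map from the $p^\prime$-multinormed subspace $Y^\perp \subset X^*$ into $Z^*$, of $p^\prime$-norm $\|u\|_p$. By $1$-injectivity of $Z^*$ it extends to $w : X^* \to Z^*$ with $w \circ q^* = u^*$ and $\|w\|_{p^\prime} = \|u\|_p$. I then set
$$
\tilde u = w^* \kappa_Z : Z \longrightarrow Z^{**} \longrightarrow X^{**} ,
$$
using $Z^{**} = (Z^*)^*$ and the adjoint $w^* : Z^{**} \to X^{**}$.

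It remains to check the two required properties, and both are formal. For the commuting identity, taking adjoints of $w q^* = u^*$ gives $q^{**} w^* = (q^*)^* w^* = (w q^*)^* = u^{**}$ as maps $Z^{**} \to W^{**}$; precomposing with $\kappa_Z$ and using the naturality relation $u^{**}\kappa_Z = \kappa_{W} u$ yields $q^{**}\tilde u = q^{**} w^* \kappa_Z = \kappa_{W} u$, as desired. For the norm, the adjoint isometry gives $\|w^*\|_p = \|w\|_{p^\prime} = \|u\|_p$, hence $\|\tilde u\|_p \le \|\kappa_Z\|_p\,\|w^*\|_p \le \|u\|_p$; the reverse inequality is automatic from the identity just proved, since $\kappa_{W}$ is a $p$-multiisometry and $q^{**}$ a $p$-multicontraction, so $\|u\|_p = \|\kappa_{W} u\|_p = \|q^{**}\tilde u\|_p \le \|\tilde u\|_p$. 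Thus $\|\tilde u\|_p = \|u\|_p$.

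The conceptual core is the single use of injectivity of $Z^*$; everything else is bookkeeping of natural identifications. The main obstacle is therefore ensuring that $Z^* = \mx_{p^\prime}((L^{p^\prime}(\mu))^*)$ really is $1$-injective. For $1 < p \le \infty$ this is immediate from Corollary~\ref{c:inj Lp}. The delicate endpoint is $p = 1$, where $(L^{p^\prime}(\mu))^* = (L^\infty(\mu))^*$ is an abstract $L$-space which is \emph{not} $\sigma$-finite, so Corollary~\ref{c:inj Lp} does not apply verbatim; here I would revisit its proof and note that the argument really only needs the underlying $L^1$-lattice to be a KB-space that forms a band in its bidual (so that the positive projection $Q$ exists), a property enjoyed by every $L^1(\nu)$, together with density of finitely-supported conditional expectations (available since the relevant exponent is $1<\infty$). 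A secondary point to verify carefully is that each identification invoked — the adjoint isometry $p \leftrightarrow p^\prime$, the duality $(\mn_p)^* = \mx_{p^\prime}$, and $q^*$ being a $p^\prime$-multiisometry onto $Y^\perp$ — is genuinely \emph{multi}-isometric and not merely isometric at the Banach-space level.
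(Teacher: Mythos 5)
Your argument is correct in substance but runs in the opposite direction from the paper's. The paper proves this directly on the \emph{domain} side: it exhausts $L^{p^\prime}(\mu)$ by finite-dimensional sublattices $Z\cong\ell^{p^\prime}_N$ (each the range of a positive contractive conditional expectation $P_Z$), lifts $u|_Z$ through $q$ using the $1$-projectivity of $\mn_p(\ell^{p^\prime}_N)$ (Proposition \ref{p:l^p_n proj}), forms $\kappa_X\tilde u_{Z\vr}P_Z$, and extracts a weak$^*$ cluster point in $B(L^{p^\prime}(\mu),X^{**})=(L^{p^\prime}(\mu)\projtens X^*)^*$ --- the bidual target arises precisely from this compactness step. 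You instead dualize the whole lifting problem into an extension problem for $u^*:\,(X/Y)^*\to Z^*=\mx_{p^\prime}((L^{p^\prime}(\mu))^*)$ along the $p^\prime$-multiisometry $q^*$ onto $Y^\perp$, solve it by the $1$-injectivity of $\mx_{p^\prime}(L^p(\mu))$ (Corollary \ref{c:inj Lp}), and take adjoints; the bidual appears because $w^*$ naturally lands in $X^{**}$. The two proofs are genuinely dual to one another: yours is cleaner bookkeeping (no fresh net argument, exact norm equality with no $\vr$, and it makes transparent why $X^{**}$ rather than $X$ is the right target), while the paper's stays entirely within $\sigma$-finite measures and the already-proved finite-dimensional projectivity, at the cost of redoing the cluster-point verification.

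The one real weak point, which you correctly flag, is the endpoint $p=1$: there $Z^*=\mx_\infty((L^\infty(\mu))^*)$, and $(L^\infty(\mu))^*$ is an abstract $L$-space that is not $\sigma$-finite, so Corollary \ref{c:inj Lp} does not apply as stated. Your proposed repair is sound --- the corollary's proof for the exponent-$1$ lattice uses only (i) density of ranges of finite conditional expectations (each lattice isometric to some $\ell^1_N$, hence covered by Proposition \ref{p:max p'}), (ii) weak$^*$ compactness of $\ball(B(Y,V^{**}))$, and (iii) a positive contractive projection $V^{**}\to V$, all of which hold for an arbitrary AL-space $V$ --- but it is an extra lemma you must actually prove, whereas the paper's route never leaves the $\sigma$-finite setting. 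The remaining identifications you rely on (adjoints preserve the multinorm under the $p\leftrightarrow p^\prime$ duality, $(\mn_p(E))^*=\mx_{p^\prime}(E^*)$, quotient/subspace duality for $q^*$, and $\kappa$ being a $p$-multiisometry) are all available from the preliminaries and \cite{DLOT}, so the rest of the argument is complete.
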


\begin{proof}[Sketch of a proof]
Without loss of generality assume $\|u\|_p \leq 1$.
Let ${\mathcal{F}}$ be the family of all finite dimensional sublattices $Z \subset L^p(\mu)$.
It is known that any such $Z$ is lattice isometric to $\ell^{p^\prime}_N$ for some $N = N_Z$,
and moreover, there exists a (contractive and positive) conditional expectation
$P_Z$ from $L^{p^\prime}(\mu)$ onto $Z$. By Proposition \ref{p:l^p_n proj}, for any $Z \in {\mathcal{F}}$
and $\vr > 0$ there exists $\tilde{u}_{Z\vr} : Z \to X$ which lifts $u|_Z$, and satisfies
$\|u_{Z\vr}\|_p < 1+\vr$. Let $v_{Z\vr} = \kappa_X u_{Z\vr} P_Z$, and note that
$\|v_{Z\vr}\| \leq \|v_{Z\vr}\|_p < 1+\vr$.

Consider ${\mathcal{F}} \times (0,1)$ as a net, with $(Z_1, \vr_1) \prec (Z_2, \vr_2)$ iff
$Z_1 \subset Z_2$ and $\vr_1 > \vr_2$. 
For every $Z$ and $\vr$, $v_{Z\vr} \in 2 \ball(B(L^p(\mu),X^{**}))$, and
the latter set is compact in the weak$^*$ topology
(as in the proof of Corollary \ref{c:inj Lp}, we use the identity
$B(L^p(\mu),X^{**}) = (L^{p^\prime}(\mu) \projtens X^*)^*$).
Consequently, the net ${\mathcal{F}} \times (0,1)$ has a subnet ${\mathcal{U}}$ so that
${\mathrm{weak}}^*-\lim_{\mathcal{U}} (v_{Z\vr}) = u \in B(L^p(\mu),X^{**})$ exists.
To verify that $u$ has the desired properties, follow the reasoning of Corollary \ref{c:inj Lp}.
\end{proof}

\begin{corollary}\label{c:L^p proj}
Suppose 
$Y \subset X$ are $p$-multinormed spaces, $q : X \to X/Y$ is a quotient map,
$(\mu_i)_{i \in I}$ are $\sigma$-finite measures, and
$u : (\sum_i \mn_p(L^{p^\prime}(\mu_i)))_1 \to X/Y$ is $p$-multibounded.
Then there exists $\tilde{u} : (\sum_i \mn_p(L^{p^\prime}(\mu_i)))_1 \to X^{**}$
so that $\|\tilde{u}\|_p = \|u\|_p$, and $\kappa_{X/Y} u = q^{**} \tilde{u}$.
\end{corollary}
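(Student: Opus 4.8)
The plan is to reduce everything to Proposition \ref{p:double dual} by exploiting the ``coproduct'' behaviour of the $\ell^1$-sum: a $p$-multibounded map out of $(\sum_i W_i)_1$ is the same data as a uniformly $p$-bounded family of maps out of the summands $W_i$. Concretely, for any $p$-multinormed space $V$ and any $u : (\sum_i W_i)_1 \to V$, let $\iota_i : W_i \to (\sum_j W_j)_1$ be the canonical (isometric, $p$-multicontractive) inclusions and set $u_i = u\iota_i$. I claim that $\|u\|_p = \sup_i \|u_i\|_p$, and conversely that any family $(u_i)$ with $\sup_i \|u_i\|_p < \infty$ glues to a single such $u$. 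The verification is routine and uses only the structure of the $\ell^1$-sum recalled in Section \ref{s:basic}: identifying $\ell^p_m \otimes (\sum_i W_i)_1$ with $(\sum_i \ell^p_m \otimes W_i)_1$, a finitely supported $x = (x_i)$ satisfies $(I_{\ell^p_m} \otimes u)(x) = \sum_i (I_{\ell^p_m} \otimes u_i)(x_i)$ in $\ell^p_m \otimes V$, so the triangle inequality gives $\|(I_{\ell^p_m}\otimes u)(x)\| \le \sum_i \|u_i\|_p \|x_i\| \le (\sup_j \|u_j\|_p)\|x\|$; the reverse inequality is immediate from $\|\iota_i\|_p \le 1$.

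With this in hand I would argue as follows. Put $W_i = \mn_p(L^{p^\prime}(\mu_i))$ and $u_i = u\iota_i : \mn_p(L^{p^\prime}(\mu_i)) \to X/Y$, so that $\|u_i\|_p \le \|u\|_p$ and $\sup_i \|u_i\|_p = \|u\|_p$. Applying Proposition \ref{p:double dual} to each $u_i$ produces $\tilde{u}_i : \mn_p(L^{p^\prime}(\mu_i)) \to X^{**}$ with $\|\tilde{u}_i\|_p = \|u_i\|_p$ and $\kappa_{X/Y} u_i = q^{**}\tilde{u}_i$. Since $\sup_i \|\tilde{u}_i\|_p = \sup_i \|u_i\|_p = \|u\|_p < \infty$, the converse (gluing) direction of the first paragraph yields a single $p$-multibounded operator $\tilde{u} : (\sum_i \mn_p(L^{p^\prime}(\mu_i)))_1 \to X^{**}$ with $\tilde{u}\iota_i = \tilde{u}_i$ for all $i$, defined on the dense subspace of finitely supported tuples by $\tilde{u}((f_i)) = \sum_i \tilde{u}_i(f_i)$ and extended by continuity. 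By the norm identity, $\|\tilde{u}\|_p = \sup_i \|\tilde{u}_i\|_p = \|u\|_p$, which is the required norm equality.

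It remains to check $\kappa_{X/Y} u = q^{**}\tilde{u}$. On a finitely supported tuple $(f_i)$ this is a finite computation,
$$
q^{**}\tilde{u}((f_i)) = \sum_i q^{**}\tilde{u}_i(f_i) = \sum_i \kappa_{X/Y} u_i(f_i)
 = \kappa_{X/Y}\Big(\sum_i u_i(f_i)\Big) = \kappa_{X/Y} u((f_i)),
$$
and since both $q^{**}\tilde{u}$ and $\kappa_{X/Y} u$ are continuous on $(\sum_i \mn_p(L^{p^\prime}(\mu_i)))_1$ and agree on a dense subspace, they coincide. I expect the only genuine points needing care to be the coproduct identity $\|u\|_p = \sup_i \|u_i\|_p$ of the first paragraph (which must hold for an arbitrary target, here $X^{**}$, and is what makes the reassembly isometric rather than merely bounded) and the density/continuity argument needed when the index set $I$ is infinite, so that the glued $\tilde{u}$ is well defined on the completion; both are standard once the finitely supported case is settled.
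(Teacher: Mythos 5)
Your proposal is correct and follows exactly the paper's route: restrict $u$ to each summand, lift each $u_i$ via Proposition \ref{p:double dual}, and reassemble as $\tilde{u} = \oplus_i \tilde{u}_i$; you have merely made explicit the coproduct identity $\|u\|_p = \sup_i \|u_i\|_p$ for the $\ell^1$-sum, which the paper's two-line proof leaves implicit.
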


\begin{proof}[Sketch of a proof]
Write $u = \oplus u_i$, where $u_i$ is the restriction of $u$ onto
its summand $L^{p^\prime}(\mu_i)$.
By Proposition \ref{p:double dual}, $\kappa_X u_i$ has a lifting
$\tilde{u}_i : \mn_p(L^{p^\prime}(\mu_i)) \to X^{**}$,
with $\|\tilde{u}_i\|_p = \|u_i\|_p$. Now $\tilde{u} = \oplus \tilde{u}_i$
has the desired properties.
\end{proof}

By Corollary \ref{c:gliding} below, passing to the second dual is essential here.

\begin{proposition}\label{p:quot of proj}
Suppose $X$ is a $p$-multinormed space. Then there exists a family
of integers $(n_i)_{i \in I}$, so that $X$ is a quotient of
$(\sum_{i \in I} \mn_p(\ell^{p^\prime}_{n_i}))_1$.
\end{proposition}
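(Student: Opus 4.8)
The plan is to derive this as the predual-dual of the injective embedding theorem, Theorem \ref{t:embed inj}, exploiting the duality between $p$- and $p^\prime$-multinormed spaces. First I would record the relevant dual computation. Writing $W = \big( \sum_{i \in I} \mn_p(\ell^{p^\prime}_{n_i}) \big)_1$, the duality formulas from Section \ref{s:basic} give
$$
W^* = \Big( \sum_{i \in I} \big(\mn_p(\ell^{p^\prime}_{n_i})\big)^* \Big)_\infty = \Big( \sum_{i \in I} \mx_{p^\prime}(\ell^p_{n_i}) \Big)_\infty ,
$$
using $\big( (\sum_i X_i)_1 \big)^* = (\sum_i X_i^*)_\infty$, the identity $(\mn_p(E))^* = \mx_{p^\prime}(E^*)$, and $(\ell^{p^\prime}_{n_i})^* = \ell^p_{n_i}$. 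By Corollary \ref{c:l^p proj}, $W$ is $1$-projective, so it is the natural candidate for a projective object covering $X$.

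Next I would apply Theorem \ref{t:embed inj} to the space $X^*$, which is a $p^\prime$-multinormed space and is the dual of the $p$-multinormed space $X$. The theorem produces a family of integers $(n_i)_{i \in I}$ together with a $p^\prime$-multiisometric embedding $j : X^* \to \big( \sum_i \mx_{p^\prime}(\ell^p_{n_i}) \big)_\infty = W^*$ (note $(p^\prime)^\prime = p$, so the target is exactly $W^*$); moreover, since $X^*$ is a dual space, the ``dual'' clause of Theorem \ref{t:embed inj} lets me take $j$ to be weak$^*$-to-weak$^*$ continuous. Weak$^*$ continuity is precisely what guarantees that $j$ is the adjoint of a map on the preduals: there exists $q : W \to X$ with $j = q^*$.

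It then remains to see that $j = q^*$ being a $p^\prime$-multiisometric embedding forces $q$ to be a $p$-quotient map, which I would check level by level. For each $n$, the pairing $(\ell^p_n \otimes W)^* = \ell^{p^\prime}_n \otimes W^*$ (and likewise for $X$) identifies $(I_{\ell^p_n} \otimes q)^*$ with $I_{\ell^{p^\prime}_n} \otimes q^* = I_{\ell^{p^\prime}_n} \otimes j$. Since $j$ is $p^\prime$-multiisometric, $I_{\ell^{p^\prime}_n} \otimes j$ is an isometric embedding for every $n$, hence so is $(I_{\ell^p_n} \otimes q)^*$. By the standard Banach-space duality between metric surjections and isometric embeddings (an operator $T$ between Banach spaces is a metric surjection if and only if $T^*$ is an isometric embedding), $I_{\ell^p_n} \otimes q$ is a metric surjection for every $n$, where completeness of $W$ ensures genuine surjectivity rather than merely dense range. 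That is exactly the statement that $q$ is a $p$-quotient map, so $X$ is $p$-multiisometric to $W/\ker q$, a quotient of $W = \big( \sum_i \mn_p(\ell^{p^\prime}_{n_i}) \big)_1$.

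The main obstacle I anticipate is this last step: making airtight the level-wise duality asserting that a weak$^*$-continuous $p^\prime$-multiisometric embedding of the dual dualizes to a genuine $p$-quotient map. The individual ingredients are standard — the identity $(I_{\ell^p_n}\otimes q)^* = I_{\ell^{p^\prime}_n}\otimes q^*$, the trace pairing $(\ell^p_n \otimes W)^* = \ell^{p^\prime}_n \otimes W^*$, and the Banach-space characterization of metric surjections through their adjoints — but one must verify them simultaneously at every finite level $n$ and confirm that surjectivity (not merely density of the range) is obtained, which is where the completeness of $W$ and its $\ell^1$-sum structure enter.
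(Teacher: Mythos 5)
Your proposal is correct and follows exactly the paper's route: apply Theorem \ref{t:embed inj} to the $p^\prime$-multinormed space $X^*$ to obtain a weak$^*$-to-weak$^*$ continuous $p^\prime$-multiisometric embedding into $\big(\sum_i \mx_{p^\prime}(\ell^p_{n_i})\big)_\infty$ and then pass to preduals; you have merely written out the level-wise duality between isometric embeddings and metric surjections that the paper leaves implicit. (Your identification of the target as $\mx_{p^\prime}(\ell^p_{n_i})$ is the consistent one; the paper's proof writes $\mn_{p^\prime}$, which appears to be a typo.)
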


\begin{proof}
By Theorem \ref{t:embed inj}, there exists a family, and a weak$^*$ to weak$^*$
continuous $p^\prime$-multiisometric embedding
$X^* \to (\sum_{i \in I} \mn_{p^\prime}(\ell^p_{n_i}))_\infty$.
Now pass to the predual.
\end{proof}

A standard reasoning (cf. \cite{Ble}) shows:

\begin{proposition}\label{p:into sum}
A $p$-multinormed space $X$ is $1$-projective if and only if, for every
$\vr > 0$, there exists a family of positive integers $(n_i)_{i \in I}$,
a subspace $X^\prime \subset (\sum_i \mn_p(\ell^{p^\prime}_{n_i}))_1$,
so that:
\begin{enumerate}
\item 
There exists a $p$-multicontraction $u : X^\prime \to X$ with
$\|u^{-1}\|_p \leq 1+\vr$.
\item
$X^\prime$ is $(1+\vr)$-complemented in $(\sum_i \mn_p(\ell^{p^\prime}_{n_i}))_1$.
\end{enumerate}
\end{proposition}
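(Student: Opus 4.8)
The plan is to prove the two directions of this ``if and only if'' characterization of $1$-projectivity separately, treating the forward direction as an application of the universal projective object already constructed, and the backward direction as the genuine content.

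For the forward implication, suppose $X$ is $1$-projective. By Proposition \ref{p:quot of proj}, there is a family $(n_i)_{i \in I}$ and a $p$-quotient map $q : (\sum_i \mn_p(\ell^{p^\prime}_{n_i}))_1 \to X$. Fix $\vr > 0$. Since $X$ is $1$-projective, the identity map $I_X : X \to X$ (which has $\|I_X\|_p = 1$) lifts through $q$: there exists $s : X \to (\sum_i \mn_p(\ell^{p^\prime}_{n_i}))_1$ with $qs = I_X$ and $\|s\|_p \leq 1 + \vr$. Then $X^\prime := s(X)$ is the desired subspace, and $u := q|_{X^\prime}$ is a $p$-multicontraction with $u^{-1} = s$ (viewed into $X^\prime$), so $\|u^{-1}\|_p \leq 1+\vr$, giving (1). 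For (2), the map $P := sq$ is an idempotent (since $qs = I_X$ gives $P^2 = s(qs)q = sq = P$) with range $X^\prime$, and $\|P\|_p \leq \|s\|_p \|q\|_p \leq 1+\vr$, so $X^\prime$ is $(1+\vr)$-complemented, giving (2).

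For the backward implication, I would run the standard ``two small perturbations accumulate to a controlled lift'' argument, borrowing the scheme of \cite{Ble}. Assume conditions (1) and (2) hold for every $\vr$. To show $X$ is $1$-projective, take a $p$-quotient map $q : Y \to W$, a $p$-multibounded $u : X \to W$ with $\|u\|_p \leq 1$, and a tolerance $\eta > 0$; I must produce a lift $\tilde u : X \to Y$ with $\|\tilde u\|_p \leq 1 + \eta$ and $q\tilde u = u$. The idea is to choose $\vr$ small relative to $\eta$, realize $X$ approximately inside the projective object $\Pi := (\sum_i \mn_p(\ell^{p^\prime}_{n_i}))_1$ via the contraction $u : X^\prime \to X$ of (1) together with the $(1+\vr)$-projection $P$ of (2), and then lift through $q$ using the genuine $1$-projectivity of $\Pi$ (Corollary \ref{c:l^p proj}). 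Concretely, the composite $u \circ u^{-1} \circ P$ maps $\Pi \to X$ with controlled $p$-norm; compose with $u : X \to W$ (the problem's $u$, renaming to avoid clash), lift the resulting map $\Pi \to W$ along $q$ to a map $\Pi \to Y$ of $p$-norm at most $(1+\vr')$, and restrict back to $X^\prime \cong X$. The multiplicative errors $(1+\vr)$ from $\|u^{-1}\|_p$, from $\|P\|_p$, and from the lift through $\Pi$ combine into a factor that is $\leq 1+\eta$ once $\vr$ is chosen small enough.

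The main obstacle is the bookkeeping in the backward direction: one must verify that composing the near-isometric identification of $X$ with $X^\prime$, the near-contractive projection $P$, and the lift through $\Pi$ produces a map whose restriction to (the copy of) $X$ genuinely satisfies $q\tilde u = u$ on the nose, not merely approximately, while keeping $\|\tilde u\|_p \leq 1+\eta$. The lifting through $\Pi$ only factors through $X^\prime$ up to the isomorphism $u$, so I must track carefully how $u^{-1}$ and $P$ interact with $q$ and confirm that the composition restores the exact factorization $q\tilde u = u$; this is where the argument of \cite{Ble} is invoked, and the sole quantitative point is confirming that the product of the three $(1+\vr)$-type factors stays below the prescribed $1+\eta$.
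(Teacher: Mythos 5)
Your proof is correct and is precisely the ``standard reasoning'' the paper invokes from \cite{Ble} without writing out: in the forward direction, lift the identity through the quotient map of Proposition \ref{p:quot of proj} to obtain both the near-isometric copy $X^\prime = s(X)$ and the projection $P = sq$; in the backward direction, push the problem through the $1$-projective sum via $u \circ P$ (note your composite should read $u \circ P$, not $u \circ u^{-1} \circ P$, which does not typecheck) and pull back along $u^{-1}$. The one point you flag as the main obstacle resolves immediately: since $P$ is a projection onto $X^\prime$ and $u^{-1}$ takes values in $X^\prime$, one has $P u^{-1} = u^{-1}$, hence $q \tilde u = v\, u\, P\, u^{-1} = v\, u\, u^{-1} = v$ holds exactly, and the norm of $\tilde u$ is bounded by the product of three factors of the form $1+\vr$, which is below $1+\eta$ for $\vr$ small.
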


A ``gliding hump'' argument immediately yields:

\begin{corollary}\label{c:gliding}
Any infinite dimensional subspace of a $1$-projective $p$-mul\-ti\-nor\-med space
contains a 
copy of $\mx_p(\ell^1)$.
Consequently, for $1 < p < \infty$, the space
$\mn_p(L^{p^\prime}(\mu))$ is $1$-projective if and only if it is finite dimensional.
\end{corollary}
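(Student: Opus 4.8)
The plan is to combine Proposition \ref{p:into sum} with a gliding hump argument inside an $\ell^1$-sum, and then to read off the consequence from the reflexivity of $L^{p^\prime}(\mu)$. First I would reduce to studying subspaces of $(\sum_i \mn_p(\ell^{p^\prime}_{n_i}))_1$. Let $X$ be $1$-projective and $W \subset X$ infinite dimensional. Fixing $\vr > 0$ and applying Proposition \ref{p:into sum}, we obtain an index family $(n_i)$, a subspace $X^\prime \subset (\sum_i \mn_p(\ell^{p^\prime}_{n_i}))_1$, and a $p$-multicontraction $u : X^\prime \to X$ with $\|u^{-1}\|_p \leq 1+\vr$; in particular $u$ is a $p$-multi-isomorphism onto $X$, so $W^\prime := u^{-1}(W)$ is an infinite dimensional subspace of the $\ell^1$-sum. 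It then suffices to produce a copy of $\mx_p(\ell^1)$ inside $W^\prime$ and push it forward by $u$, incurring only the distortion $1+\vr$.

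Next I would run a gliding hump in the $\ell^1$-sum. Since the coordinate projection $P_F$ onto $\bigoplus_{i \in F} \mn_p(\ell^{p^\prime}_{n_i})$ has finite rank for finite $F$, and every vector of the sum is $\vr$-supported on a finite set, one can inductively select normalized $w_k \in W^\prime$ and finite sets $F_1 \subset F_2 \subset \cdots$ with $\mathrm{supp}(w_k) \cap F_{k-1} = \emptyset$ and $\|w_k - P_{F_k} w_k\| < \vr_k$, for a preassigned summable sequence $(\vr_k)$. The normalized main parts $\tilde{w}_k := P_{F_k} w_k / \|P_{F_k} w_k\|$ are then exactly disjointly supported. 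The central computation is that a disjointly supported normalized sequence in the $\ell^1$-sum spans a space $p$-multiisometric to $\mx_p(\ell^1) = (\sum_k \RR)_1$: for $a_k \in \ell^p_n$, disjointness of supports together with the cross-norm identity $\|a \otimes y\| = \|a\|\,\|y\|$ forces $\|\sum_k a_k \otimes \tilde{w}_k\| = \sum_k \|a_k\|$, which is precisely the $p$-multinorm of $\mx_p(\ell^1)$.

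A perturbation argument, using $\sum_k \vr_k$ small and the $1$-unconditionality of the $\ell^1$-structure, then shows that $(w_k)$ is $p$-multiequivalent to $(\tilde{w}_k)$, so that $\overline{\mathrm{span}}(w_k) \subset W^\prime$ is $p$-multiisomorphic to $\mx_p(\ell^1)$; pushing forward by $u$ yields the desired copy in $W$. I expect the main obstacle to be exactly this perturbation step carried out in the $p$-multinorm rather than merely the Banach norm: one must check that $\sum_k a_k \otimes (w_k - \tilde{w}_k)$ has small multinorm relative to $\sum_k \|a_k\|$, which rests on the tails $w_k - \tilde{w}_k$ being norm-small and essentially disjointly supported, so that the $\ell^1$-sum structure again controls the multinorm.

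Finally, the consequence is immediate. If $1 < p < \infty$ and $\mn_p(L^{p^\prime}(\mu))$ is both infinite dimensional and $1$-projective, then by the first part it contains a copy of $\mx_p(\ell^1)$; since every $p$-multibounded map is bounded, the underlying Banach space $L^{p^\prime}(\mu)$ would contain an isomorphic copy of $\ell^1$. But $1 < p < \infty$ gives $1 < p^\prime < \infty$, so $L^{p^\prime}(\mu)$ is reflexive and cannot contain $\ell^1$, a contradiction. Conversely, if $L^{p^\prime}(\mu)$ is finite dimensional it is isometric to some $\ell^{p^\prime}_N$, and $\mn_p(\ell^{p^\prime}_N)$ is $1$-projective by Proposition \ref{p:l^p_n proj}.
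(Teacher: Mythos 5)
Your proposal is correct and follows exactly the route the paper indicates: the paper offers no written proof beyond the remark that a ``gliding hump'' argument applied to the representation of Proposition \ref{p:into sum} immediately yields the result, and your argument --- almost-disjointification in the $\ell^1$-sum, the observation that disjointly supported normalized vectors span a $p$-multiisometric copy of $\mx_p(\ell^1)$, a small perturbation, and reflexivity of $L^{p^\prime}(\mu)$ for the consequence --- is precisely that argument carried out in detail. No gaps.
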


{\bf Acknowledgments.}
The author wishes to thank his co-authors on \cite{DLOT} (G.~Dales, N.~Laustsen, and
V.~Troitsky), as well as A.~Helemskii and N.~Nemesh, for many stimulating conversations.
This work was partially supported by the Simons Foundation Travel Award 210060.

\end{document}